\newtheorem{theorem}{Theorem}
\newtheorem{corollary}[theorem]{Corollary}
\newtheorem{question}{Question}
\newtheorem*{claim*}{Claim}
\newtheorem*{conjecture*}{Conjecture}
\newtheorem{conjecture}{Conjecture}
\theoremstyle{definition}
\newtheorem*{definition*}{Definition}
\newtheorem{remark}[theorem]{Remark}
\newenvironment{subproof}[1][\proofname]{%
	\begin{proof}[#1]%
	}{%
	\end{proof}%
}
\newtheorem{proposition}[theorem]{Proposition}
\newcommand{\beq}[1]{\begin{equation}\label{#1}}
	\newcommand{\enq}[0]{\end{equation}}
\newcommand{\Clique}{\mathrm{Clique}}
\newcommand{\Star}{\mathrm{Star}}
\newcommand{\ep}{\varepsilon}
\title{On a clique-building game of Erd\H{o}s}
\author{Alexandru Malekshahian\footnote{Mathematical Institute, University of Oxford. E-mail: {\tt alex.malekshahian@maths.ox.ac.uk}. Research completed while the author was affiliated with the Department of Mathematics, King's College London.}\and Sam Spiro\footnote{Department of of Mathematics, Georgia State University.  Email: {\tt sspiro@gsu.edu}. This material is based upon work supported by the National Science Foundation Mathematical Sciences Postdoctoral Research Fellowship under Grant No. DMS-2202730.}}
\date{\today}
\begin{document}
	\maketitle
	
	\begin{abstract}
		The following game was introduced in a list of open problems from 1983 attributed to Erd\H{o}s: two players take turns claiming edges of a $K_n$ until all edges are exhausted. Player 1  wins the game if the largest clique that they claim at the end is strictly larger than the largest clique of their opponent; otherwise, Player 2 wins the game. Erd\H{o}s conjectured that Player 2 always wins this game for $n\geq 3$. We make the first known progress on this problem, proving that this holds for at least $3/4$ of all such $n$. We also address a biased version of this game, as well as the corresponding degree-building game, both of which were  originally proposed by Erd\H{o}s as well.
	\end{abstract}

	\section{Introduction}\label{intro}
	
	In a list of open problems from 1983, Richard Guy \cite{guy83} mentions three game-theoretic questions attributed to Erd\H{o}s, with these questions appearing jointly as problem \# 778 on Thomas Bloom's list of Erd\H{o}s problems and reiterated recently by Bloom at the 2024 British Combinatorial Conference.
	
	Each of these games (defined formally below) follows the same broad framework: two players take turns claiming previously unclaimed edges of a complete graph $K_n$ until all edges have been claimed. The winner is then declared to be the player who has claimed as their own the largest ``structure'' of a given type, and in the case of equal-sized structures, the ``weaker'' player (in a suitable sense) is declared to be the winner.
	
	All the games we study here fall into the classical paradigm of (finite) \emph{positional games}: they are two-player, deterministic games with perfect information. We quickly take stock of the standard terminology we will use. We say a game is a \emph{Player 1 (2) win} if, whenever both players play optimally, it is always Player 1 (2) who is guaranteed to win. It is a standard fact due to Zermelo \cite{Zermelo} that all positional games in which a draw is not possible are either Player 1 wins or Player 2 wins, and the central question is to identify which of the two categories a given game falls into. For a comprehensive treatise on the subject of positional games and more, we refer the reader to the classical monograph of Beck \cite{beck} as well as the more condensed book of Hefetz, Krivelevich, Stojakovi\'{c} and Szab\'{o} \cite{HKSSz}.
	
	To the best of our knowledge (and that of several colleagues we have asked), no results related to games of the type suggested by Erd\H{o}s in \cite{guy83} have been obtained until now. In this note, we take a first step towards understanding these three games of Erd\H{o}s. We begin by formally defining each of the games in turn and stating our corresponding results.   
	
	\begin{definition*} (Clique-building game)
		Given an integer $n\ge 1$, we define the two-player game Clique$(n)$ as follows. Two players take turns claiming previously unclaimed edges of $K_n$ until all edges have been claimed. They each claim precisely one edge on each of their respective turns, with Player 1 going first. Let $G_1$ and $G_2$ be the two graphs claimed by Player 1 and Player 2, respectively, at the end of the game (so that $E(G_2)=E(K_n)\setminus E(G_1)$). Player 1 wins the game if and only if they have claimed a strictly larger clique than Player 2, i.e. if $\omega(G_1)>\omega(G_2)$ where $\omega(G)$ denotes the clique number of a graph $G$. Otherwise, Player 2 wins.
	\end{definition*}
	
	\begin{conjecture*}[Erd\H{o}s \cite{guy83}]
		For each $n\geq 3$, Clique$(n)$ is a Player 2 win.
	\end{conjecture*}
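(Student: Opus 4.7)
The plan is to exploit symmetry by having Player 2 mirror Player 1's moves via an involution $\sigma$ of the vertex set. I would take $\sigma$ to be a fixed-point-free involution when $n$ is even, and an involution with a single fixed vertex when $n$ is odd. The induced edge action of $\sigma$ then has as its fixed points a (near-)perfect matching $M \subseteq E(K_n)$ of size $\lfloor n/2 \rfloor$, while all remaining edges are partitioned into orbits of size two. Player 2's core move is: whenever Player 1 claims a non-matching edge $e$, respond with $\sigma(e)$. A simple induction shows this response is always legal---the set of claimed edges stays $\sigma$-invariant throughout play---so the crux is how to handle the turns in which Player 1 claims a matching edge.

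My first attempt would be to fix an auxiliary pairing of $M$ in advance and respond to each matching edge $m$ with its partner. This secondary pairing is well-defined precisely when $|M| = \lfloor n/2 \rfloor$ is even, i.e.\ when $n \equiv 0$ or $1 \pmod 4$, already covering half of all $n$. Under the combined strategy, the non-matching edges of $G_2$ are exactly $\sigma$ applied to the non-matching edges of $G_1$; thus, if $S$ is a clique of size $k$ in $G_1$ containing $j$ matching pairs of $\sigma$, then $G_2[\sigma(S)]$ is isomorphic to $K_k$ with those $j$ matching edges deleted, immediately yielding $\omega(G_2) \geq k - j$. To promote this to $\omega(G_2) \geq k$, I would try to either augment the near-clique on $\sigma(S)$ using matching edges Player 2 controls (by choosing the auxiliary pairing adaptively during play), or find a distinct clique of size $k$ in $G_2$ using the abundance of mirrored edges.

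To push past $n \equiv 0, 1 \pmod 4$ and reach the advertised density of $3/4$, I would next try to cover one more residue class. A natural attempt for $n \equiv 2 \pmod 4$ is to use an involution with two fixed vertices, which again yields a perfect matching of fixed edges but with a different structural position, potentially enabling a workable auxiliary pairing. For $n \equiv 3 \pmod 4$, one possibility is to have Player 2 spend their first response on a carefully chosen matching edge, absorbing the extra move Player 1 is granted and reducing the remainder of the game to a nearly symmetric situation on $K_n$ minus a few edges.

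The main obstacle across all cases is the augmentation step: showing that, no matter how Player 1 concentrates matching edges into their cliques, Player 2 can always match or exceed them. I expect this to require both an adaptive rule for selecting the auxiliary pairing and a delicate case analysis depending on how many matching edges appear in Player 1's largest clique. In particular, when $j$ is close to $k/2$ (so Player 1's clique is roughly a blow-up of matching pairs), the near-clique on $\sigma(S)$ is too small to directly suffice, and the argument must instead produce a completely different clique in $G_2$; pinning down exactly why such a clique exists is, I suspect, where the bulk of the combinatorial difficulty lies.
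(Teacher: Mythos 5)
Your plan has a genuine gap at exactly the point you flag as the ``main obstacle,'' and that obstacle is not a technical loose end but the entire content of the problem. Against a mirroring strategy built from a fixed involution $\sigma$, Player 1 can simply select one matched pair $u, v=\sigma(u)$, claim the fixed edge $uv$, and then grow a clique $S\supseteq\{u,v\}$ by adding vertices $w$ in general position (with $\sigma(w)\notin S$): every mirror response to an edge $wx$ with $x\in S$ lands on $\sigma(w)\sigma(x)\notin \binom{S}{2}$, so the mirror never obstructs the construction. The resulting blue set $\sigma(S)$ is then $K_k$ minus precisely the edge $uv$ --- and that missing edge is \emph{red}, so it can never be supplied by ``matching edges Player 2 controls'': the auxiliary pairing of $M$, adaptive or not, only hands Player 2 matching edges vertex-disjoint from the one that is missing, and a blue matching edge $u'v'$ elsewhere is useless for completing a clique on $\sigma(S)$. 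Your fallback --- that $G_2$ must contain some \emph{other} clique of size $k$ --- is asserted without any mechanism; since under a pure mirroring strategy Player 1 unilaterally determines the entire final colouring, the burden is squarely on you to rule out adversarial configurations, and I see no reason the claim should hold. As written, the argument establishes only $\omega(G_2)\ge \omega(G_1)-1$, which is exactly what strategy stealing already gives for free, and no residue class of $n$ is actually covered.

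It is also worth noting that the paper does not prove this conjecture either; it proves only that the set of $n$ for which Player 2 wins has asymptotic density at least $3/4$, and by an entirely different route: a self-improvement argument showing that if Clique$(n)$ is a Player 1 win then Clique$(n+1)$, Clique$(n+2)$ and Clique$(n+3)$ are all Player 2 wins. There, Player 2 deliberately sacrifices one, two or three vertices (giving all their incident edges to the opponent) and then runs Player 1's winning strategy from the smaller board, with the strict inequality inherited from $K_n$ absorbing the bounded gain from the sacrificed vertices. A completed version of your symmetry approach would pin down explicit residue classes and be a genuinely different contribution, but in its current form it is a programme with its central step missing, not a proof.
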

	
	Let us first give some background and motivation for this problem. A classical argument of \emph{strategy stealing} originally due to Nash shows that Player 1 can always guarantee at least $\omega(G_1)\geq \omega(G_2)$. Indeed, suppose that Player 2 had a strategy $S$ which guarantees that $\omega(G_1)<\omega(G_2)$. Then Player~1 could make an initial, arbitrary move and on subsequent turns ignore their initially claimed edge and instead pretend they were the second player; thus, following the strategy $S$ they can also guarantee that $\omega(G_1)>\omega(G_2)$, a contradiction (see also \cite{beck,HKSSz} for details).
	
	Therefore, the most that Player 2 could hope for in this sort of clique-building game is to guarantee a largest clique of size equal to that of Player 1 but no larger. Strategy stealing thus formalizes the intuition that the player who goes first in these kinds of games should have an advantage with regards to building a `target' structure. We think of the above conjecture as precisely saying that this advantage is negligible: while Player 1 can always do at least as well as Player 2 (in terms of size of the largest clique), they should not be able to do \emph{strictly} better.
	
	While we are unable to prove this result for all values of $n$, we are able to show that it holds for most values.
	
	\begin{theorem}\label{clique}
		Let $\mathcal{C}\subseteq \mathbb{N}$ be the set of all natural numbers $n$ for which the Clique$(n)$ game in a Player 2 win. Then $\mathcal{C}$ has asymptotic density at least $\frac{3}{4}$.
	\end{theorem}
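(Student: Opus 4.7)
The plan is to construct an explicit Player 2 winning strategy for $n$ lying in three out of the four residue classes modulo $4$, which would yield density at least $3/4$. The natural starting point is a pairing strategy based on an involution $\sigma: V(K_n) \to V(K_n)$. For $n \equiv 0 \pmod 4$ or $n \equiv 1 \pmod 4$, take $\sigma$ to be fixed-point-free (if $n$ is even) or to have exactly one fixed vertex (if $n$ is odd). The $\sigma$-fixed edges then form a matching $M$ of size $\lfloor n/2 \rfloor$, which is even in these residue classes, and the non-fixed edges partition into pairs $\{e, \sigma(e)\}$. After pairing the edges of $M$ arbitrarily, we obtain a complete pairing of $E(K_n)$, and Player 2's strategy is simply to respond to each of Player 1's moves by playing its paired edge.

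To verify that $\omega(G_2) \geq \omega(G_1)$ under this strategy, I would take a maximum clique $K \subseteq G_1$ and try to produce a clique of the same size in $G_2$. The first candidate is $\sigma(V(K))$: by construction, every non-$\sigma$-fixed edge of $K$ is matched in $G_2$ with its $\sigma$-image, so the non-matching edges among $\sigma(V(K))$ all lie in $G_2$. The difficulty is that matching edges of $K$ are $\sigma$-fixed and thus remain in $G_1$, leaving $\sigma(V(K))$ short of being a clique in $G_2$ by exactly those edges. The remedy is to substitute: for each matching edge $m \in K$, replace one endpoint of $m$ in $\sigma(V(K))$ by an appropriate endpoint of the paired matching edge $m' \in G_2$. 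The substituted vertex set should then be a clique of size $|V(K)|$ in $G_2$, provided the matching pairing has been chosen compatibly. This compatibility condition is the combinatorial heart of the argument and the main obstacle: a naively chosen arbitrary matching pairing fails even for $n = 4$, where Player 1 can build the triangle $\{1,2,3\}$ using the matching edge $\{1,2\}$ and the substituted ``triangle'' in $G_2$ only yields $\omega(G_2) = 2$. Finding a matching pairing that makes the substitution always yield a clique (perhaps by aligning it with a secondary involution or a suitable $1$-factorization of $K_n$) is where the real work lies.

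To reach density $3/4$, the above handles $n \equiv 0, 1 \pmod 4$ and a third residue class must be recovered, most plausibly $n \equiv 2 \pmod 4$ via an inductive lift from $n - 1 \equiv 1 \pmod 4$. Given a Player $2$ winning strategy for Clique$(n-1)$, I would extend it to Clique$(n)$ by using the $(n-1)$-strategy on the subgraph $K_{n-1} \subseteq K_n$ and responding to Player 1's edges incident to the extra vertex $v^*$ with other edges at $v^*$. The tactical choice of $v^*$-edges must prevent $v^*$ from asymmetrically extending Player 1's maximum clique, i.e., ensure that whenever the extra vertex increases $\omega(G_1)$ by $1$, it also increases $\omega(G_2)$ by at least $1$. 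This coordination between the two sub-games---executing the $K_{n-1}$ pairing while simultaneously playing defensively on the star at $v^*$ without knowing Player 1's eventual max clique---is likely the most delicate part of the proof and the second main obstacle.
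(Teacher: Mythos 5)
Your proposal is not a proof: both of the steps you flag as ``the real work'' are genuinely open gaps, and the first one is, as far as I can see, fatal to the pairing approach. The involution strategy guarantees only that $\sigma$ maps Player 1's non-matching edges to Player 2 edges; it gives Player 2 no control whatsoever over the matching $M$, whose $\lfloor n/2\rfloor$ edges get split between the players by an essentially unrelated auxiliary pairing. Player 1 can exploit this directly. Suppose Player 1 first acquires two matching edges $a\sigma(a)$ and $b\sigma(b)$ (your auxiliary pairing hands half of $M$ to each player), and then claims $ab$ and $a\sigma(b)$. These two edges lie in \emph{different} $\sigma$-orbits ($\{ab,\sigma(a)\sigma(b)\}$ and $\{a\sigma(b),\sigma(a)b\}$), so the mirroring strategy permits Player 1 to own both, giving Player 1 the triangle $\{a,b,\sigma(b)\}$, while Player 2's edges on $\{a,\sigma(a),b,\sigma(b)\}$ are only $\sigma(a)b$ and $\sigma(a)\sigma(b)$ --- no triangle. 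Your proposed repair (substituting endpoints of the paired matching edge $m'$) replaces vertices of $\sigma(V(K))$ by vertices that have no a priori adjacency in $G_2$ to the rest of $\sigma(V(K))$, so there is no reason the substituted set is a clique; you acknowledge this fails already at $n=4$, and no candidate ``compatible'' pairing is exhibited. The second gap (lifting $n-1$ to $n$ by playing defensively on the star at $v^*$) is equally unresolved: nothing prevents $v^*$ from extending Player 1's eventual maximum clique while extending none of Player 2's, and you give no mechanism to rule this out.

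The paper takes a completely different route that avoids constructing any explicit Player 2 strategy. It proves the conditional statements: if $\mathrm{Clique}(n)$ is a Player 1 win, then $\mathrm{Clique}(n+1)$, $\mathrm{Clique}(n+2)$ and $\mathrm{Clique}(n+3)$ are all Player 2 wins (the last for $n\ge 46$, using $R(5)\le 46$). Hence Player 1 wins are isolated with gaps of at least $4$, which gives density at least $3/4$ for $\mathcal{C}$ without ever identifying which residue classes lie in it. The engine is strategy stealing: Player 2 in $K_{n+m}$ sacrifices a few vertices to Player 1 in a controlled configuration (so that those vertices can raise $\omega(G_1)$ by at most a bounded, compensated amount) and then simulates Player 1's hypothesized winning strategy for $\mathrm{Clique}(n)$ on the remaining $K_n$. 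If you want to salvage your write-up, you would need either to supply the compatible matching pairing (which I do not believe exists in general, per the $K_4$ obstruction above) or to abandon the explicit-strategy plan in favour of a conditional, gap-between-counterexamples argument of this kind.
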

	We will prove \Cref{clique} by a strategy stealing argument.  More precisely, we will show that if Player 1 wins for a given value $n$, then this will imply that Player 2 wins at the values $n+1,\ n+2$, and $n+3$; see \Cref{sec:clique} for more.

	We now turn to our second game.
	\begin{definition*}(Star-building game)
		Given an integer $n\ge 1$, we define the two-player game Star$(n)$ as follows. As in the Clique$(n)$ game, Players 1 and 2 alternate claiming previously unclaimed edges of $K_n$ until all edges have been claimed. Player 1 wins if and only if, after all edges have been claimed, the largest star they have claimed is strictly larger than the largest star claimed by Player 2, i.e. if $\Delta(G_1)>\Delta(G_2)$. Otherwise, Player 2 wins.
	\end{definition*}
	It was asked in \cite{guy83} who wins this game, and as in the case of Clique$(n)$, we manage to show that it is Player 2 a majority of the time.
	
	\begin{theorem}\label{star}
		Let $\mathcal{S}$ be the set of all natural numbers $n$ for which the Star$(n)$ game is a Player 2 win. Then $\mathcal{S}$ has asymptotic density at least $\frac{2}{3}$.
	\end{theorem}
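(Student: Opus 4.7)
The plan is to use the elementary identity $\deg_{G_1}(v) + \deg_{G_2}(v) = n - 1$ for every vertex $v$, which gives $\Delta(G_2) = n - 1 - \delta(G_1)$. Hence Player~2 wins Star$(n)$ if and only if
\[
\Delta(G_1) + \delta(G_1) \leq n - 1,
\]
so Player~2's task reduces to forcing the existence of a low-$G_1$-degree vertex that compensates for the maximum degree of $G_1$. This reformulation is the starting point for the entire argument.

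Player~2's strategy will be to fix a target vertex $v^*$ at the outset and run a two-phase plan: in the \emph{greedy phase}, claim an unclaimed edge incident to $v^*$ on every turn whenever one exists; in the \emph{blocking phase} (once all $v^*$-edges are claimed), claim an unclaimed edge incident to the current maximum-degree vertex of $G_1$. A short pigeonhole analysis of the race for $v^*$-edges shows the greedy phase guarantees $\deg_{G_2}(v^*) \geq \lfloor (n-1)/2 \rfloor$, and hence $\delta(G_1) \leq \deg_{G_1}(v^*) \leq \lceil (n-1)/2 \rceil$, handing Player~2 control of the low-degree half of the inequality.

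The harder step is the blocking phase: after the greedy phase, Player~1 enjoys a head start in $K_n \setminus \{v^*\}$, having claimed some non-$v^*$-edges while Player~2 was entirely occupied at $v^*$, and one needs Player~2's remaining moves to suffice to cap $\Delta(G_1)$ at $n - 1 - \deg_{G_1}(v^*)$. I would carry out this step via a careful potential/inductive argument on the sub-game in $K_{n-1}$ tracking how fast Player~1 can grow a new high-degree vertex given their head start, compared against how quickly Player~2 can chase and block it. Quantitatively, one compares Player~1's remaining move budget $\lceil \binom{n}{2}/2 \rceil - (n-1-k)$ with Player~2's budget $\lfloor \binom{n}{2}/2 \rfloor - (n-1-k)$, where $k = \deg_{G_1}(v^*)$; the blocking argument succeeds precisely when these budgets line up favorably.

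The main obstacle, and where the density degrades from $1$ to $2/3$, is this budget comparison: the parities of $\binom{n}{2}$ and of $n-1$ interact, and for two of three residue classes of $n$ modulo a small modulus the inequality $\Delta(G_1) \leq n - 1 - \deg_{G_1}(v^*)$ can be enforced, while for the remaining residue class Player~2's budget falls exactly one short of what is needed. For that residue class the strategy just described does not obviously win, and the set $\mathcal{S}$ of Player~2 wins is only guaranteed to contain the other two residues, yielding asymptotic density at least $\tfrac{2}{3}$.
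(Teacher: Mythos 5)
Your approach is genuinely different from the paper's, but as it stands it does not work. The paper never constructs an explicit Player~2 strategy from scratch; it proves the two implications ``if Star$(n)$ is a Player 1 win then Star$(n+1)$ is a Player 2 win'' and ``\dots then Star$(n+2)$ is a Player 2 win,'' via strategy stealing (Player~2 simulates Player~1's winning strategy for Star$(n)$ on an $n$-vertex sub-board and handles the extra vertices with a pairing argument). This makes Player-1-win values of $n$ isolated, giving density at least $2/3$; the $2/3$ has nothing to do with residue classes of $n$.

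Your reformulation $\Delta(G_2)=n-1-\delta(G_1)$ is correct, but the strategy you build on it has a fatal flaw in the blocking phase, beyond the fact that the ``careful potential/inductive argument'' is never actually supplied. You want to force $\Delta(G_1)\le n-1-\deg_{G_1}(v^*)$. Since trivially $\Delta(G_1)\ge \deg_{G_1}(v^*)$, this target already requires $\deg_{G_1}(v^*)\le (n-1)/2$. But if Player~1 simply contests $v^*$ on every turn during your greedy phase, she claims $\lceil (n-1)/2\rceil$ of the $n-1$ edges at $v^*$ (she moves first), which exceeds $(n-1)/2$ whenever $n$ is even, so the target inequality is violated at $v^*$ itself before the blocking phase even begins. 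For odd $n$ the target forces $\Delta(G_1)=\deg_{G_1}(v^*)=(n-1)/2$ exactly, i.e.\ $G_1$ must be $\frac{n-1}{2}$-regular; this is impossible by an edge-count/parity argument when $n\equiv 3\pmod 4$ (Player~1 owns $\binom{n}{2}/2+1/2$ edges, so her average degree strictly exceeds $(n-1)/2$), and in any case Player~1 can avoid regularity by running the same degree race at a second vertex. The only escape is that $\delta(G_1)$ may be attained at a vertex other than $v^*$ (indeed in small examples it is), but your strategy controls nothing about any vertex other than $v^*$, so the win does not follow. Fixing this requires a genuinely different idea, not a refinement of the budget comparison.
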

	Again, we will prove \Cref{star} by a strategy stealing argument by showing that if Player 1 wins for a given value $n$, then this will imply that Player 2 wins at the values $n+1$ and $n+2$.
	
	We now consider our final game, which is a generalization of Clique$(n)$.
	\begin{definition*}(Biased clique-building game)
		Given integers $n,p,q\ge 1$, we define the two-player game Clique$_{(p,q)}(n)$ as follows. Players 1 and 2 alternate claiming previously unclaimed edges of $K_n$, with Player 1 going first. Player 1 claims $p$ edges on each of their turns, while Player 2 claims $q$ edges on each of their turns (unless it is the beginning of Player~1's next turn, and there are fewer than $p$ edges remaining, or the beginning of Player~2's turn and there are fewer than $q$ edges remaining, in which case the player to move simply claims all remaining unclaimed edges). If $p\ge q$, then Player 1 wins if and only if their largest clique is strictly larger than the largest clique of their opponent, i.e.\ if $\omega(G_1)>\omega(G_2)$, and otherwise Player 2 wins.  If $p<q$, then Player 2 wins if and only if their largest clique is strictly larger than the largest clique of their opponent, i.e.\ if $\omega(G_2)>\omega(G_1)$, and otherwise Player 1 wins.
	\end{definition*}
	
	Note that when $p=q=1$ this game exactly recovers Clique$(n)$, and in general this game follows the philosophy that the ``stronger'' player (i.e.\ the one who claims strictly more edges on their turn) needs to make a strictly larger clique at the end to be declared the winner.

	It was suggested in \cite{guy83} that Player 2 can always win the game $\Clique_{(1,2)}(n)$ for $n\ge 4$, i.e.\ that the advantage of Player 1 going first in the game $\Clique(n)$ is completely eliminated if Player 2 gets to claim more edges with each of their moves. While we are unable to show Player 2 wins $\Clique_{(1,q)}(n)$ for $q=2$, we are able to do this for a somewhat weaker value of $q$.  Indeed, the original version of this paper gave a simple proof showing Player 2 wins $\Clique_{(1,q)}(n)$ for $q=15$ for large $n$, and somewhat more generally that Player 2 wins $\Clique_{(p,q)}(n)$ whenever $q\ge 16^p$ and $n$ is large.  After submitting our paper, a referee pointed out a substantial refinement of our argument, allowing us to prove the following.

	\begin{theorem}\label{bias}
		For all $p\ge 1$, there exists an integer $n_0$ such that Player 2 wins Clique$_{(p,q)}(n)$ for all $n\ge n_0$ provided
		\[q\ge \left(2+\frac{12 \log_2\log_2(p+1)}{\log_2(p+1)}\right)p+2.\]
		In particular, for all sufficiently large $n$, the game Clique$_{(1, 4)}(n)$ is a Player 2 win.
	\end{theorem}
	That is, Player 2 can win Clique$_{(p,q)}(n)$ for $q=(2+o(1))p$.  This bound appears to be quite close to best possible, as we believe that Player 2 cannot win if $q<p$ -- see \Cref{conj1}.
	
	
	As a final aside, we note that the above games can all be viewed as examples of \emph{Maker-Breaker} games (see \cite{beck,HKSSz} for formal definitions). Indeed, the winning sets for Maker are precisely the graphs $G\subseteq K_n$ which satisfy $\omega(G)>\omega(\overline G)$ (in the case of Clique$(n)$ and Clique$_{(p,q)}(n)$) or $\Delta(G)>\Delta(\overline G)$ (in the case of Star$(n)$). That being said, there is a clear departure of the games studied here from the spirit of more traditional Maker-Breaker games whose winning sets typically have a simpler structure, such as being connected or Hamiltonian. In particular, the `self-dual' nature of the games proposed by Erd\H{o}s makes it difficult to apply classical tools from the study of Maker-Breaker games. Indeed, we could only make use of Beck's method of `building by blocking' and the Erd\H{o}s-Selfridge criterion in our analysis of the the Clique$_{(p, q)}(n)$ game for $q$ large relative to $p$. We therefore need to employ different methods in our proofs of \Cref{clique,star}. 
	
	\begin{remark}\label{monotonicity}
		In what follows, we will make heavy use of the \emph{monotonicity} of all of the above games. Namely, if the current state of the game is a Player $i$ win and we decide to give a set of additional edges to Player $i$ (thus breaking the usual pattern of players alternating turns), then the game stays a win for Player $i$.
	\end{remark}
	
	\section{The clique-building game}\label{sec:clique}
	
	In this section we prove \Cref{clique}, showing that the clique-building game $\Clique(n)$ is a Player 2 win for at least $3/4$ of the possible values of $n$. As mentioned in the introduction, we will prove the following stronger statement: if $n$ sufficiently large is such that Clique$(n)$ is a Player 1 win, then Clique$(n+1)$, Clique$(n+2)$ and Clique$(n+3)$ are all Player 2 wins, which immediately gives \Cref{clique}. We prove each part of this stronger result as separate propositions.
	
	\begin{proposition}\label{clique+1}
		If  Clique$(n)$ is a Player 1 win for some $n\geq 2$, then Clique$(n+1)$ is a Player 2 win.
	\end{proposition}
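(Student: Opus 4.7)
My plan is to construct a winning strategy for Player 2 in Clique$(n+1)$ by using the hypothesized winning strategy $S$ for Player 1 in Clique$(n)$. Label the vertices of $K_{n+1}$ as $\{1,\dots,n+1\}$, set $v^{\ast}:=n+1$, and let $e_0\in E(K_n)$ denote the first move of $S$. Player 2's strategy runs a \emph{virtual} Clique$(n)$ game on $\{1,\dots,n\}$ in which she plays the part of virtual Player 1 using $S$, while Player 1's real $K_n$-moves are fed into the simulation as virtual Player 2's moves. On each of her real turns, Player 2 plays the next virtual Player 1 move dictated by $S$ whenever the virtual state calls for such a move, and otherwise claims an unclaimed edge incident to $v^{\ast}$ as a padding move.

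The central obstacle is the turn-order conflict: real Player 1 moves first, whereas virtual Player 1 should move first in the virtual Clique$(n)$. I would resolve this by case analysis on Player 1's opening move. If Player 1 opens with an edge incident to $v^{\ast}$, then Player 2 simply plays $e_0$ on her first turn and the virtual game begins in the correct state. If Player 1 opens with some $K_n$-edge $e\ne e_0$, then Player 2 again plays $e_0$ on her first turn; the virtual game state (virtual $P_1$ owning $e_0$, virtual $P_2$ owning $e$) is consistent with a valid execution of $S$, even though the real-time ordering of these two moves is reversed. The delicate subcase is when Player 1's opening move is $e_0$ itself; here Player 2 instead plays a $v^{\ast}$-edge, treats $e_0$ as a phantom virtual Player 1 move ``given away'' to real Player 1, and proceeds as above, letting $S$ govern the rest of the virtual game. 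One checks by induction that after these initializations Player 2 is always able to carry out a virtual Player 1 move in $K_n$ when $S$ prescribes one (the prescribed edge has not been claimed in the real game) and that when no such move is required, there are unclaimed edges incident to $v^{\ast}$ available.

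For the final clique comparison, I would use $\omega(V_{P_1})\ge \omega(V_{P_2})+1$ from $S$'s winning guarantee. One verifies that $R_{P_2}\cap K_n$ essentially coincides with $V_{P_1}$ and $R_{P_1}\cap K_n$ essentially coincides with $V_{P_2}$ (up to the single edge $e_0$ in the phantom subcase). Since adjoining $v^{\ast}$ to any clique can enlarge it by at most one, one then combines these facts to conclude $\omega(R_{P_2})\ge \omega(R_{P_1})$, so that Player 2 wins. The hardest step is this clique comparison in the phantom subcase, where Player 2 loses an edge from her virtual clique and Player 1 gains it, costing Player 2 up to two units relative to the naive bound; to close the gap, one must argue that Player 2's $v^{\ast}$-moves can be arranged so that $v^{\ast}$ extends her best $K_n$-clique by one while Player 1's $v^{\ast}$-edges cannot do the same for hers. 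This is where the monotonicity of Remark \ref{monotonicity} likely plays a role, as it allows one to freely assume additional $v^{\ast}$-edges on Player 2's side when verifying that she wins.
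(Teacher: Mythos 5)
Your simulation framework works in the ``generic'' cases, but there is a genuine gap in the phantom subcase, and you have essentially admitted it yourself. When Player 1 opens with $e_0$, your bookkeeping gives real Player 1 the graph $V_{P_2}\cup\{e_0\}$ restricted to $K_n$ and real Player 2 the graph $V_{P_1}\setminus\{e_0\}$. Deleting one edge from a clique can cost Player 2 a full unit, adding $e_0$ can gain Player 1 a full unit, and adjoining $v^{\ast}$ can gain Player 1 another unit, so the strict inequality $\omega(V_{P_1})\ge\omega(V_{P_2})+1$ leaves a deficit of up to $2$. Your proposed repair --- arranging Player 2's $v^{\ast}$-edges so that $v^{\ast}$ extends her best $K_n$-clique while Player 1's $v^{\ast}$-edges cannot extend his --- has no mechanism behind it: Player 2's $v^{\ast}$-moves are reactive padding moves, she cannot know during the game which clique will end up being her largest, and Player 1 receives roughly half of the $v^{\ast}$-edges and may choose exactly the ones completing his clique. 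As written, this subcase does not close. (It could be eliminated entirely by a relabelling trick: since $S$'s opening move is just a fixed edge of an abstract $K_n$, Player 2 may choose the isomorphism between that abstract board and $K_{n+1}\setminus\{v^{\ast}\}$ \emph{after} seeing Player 1's first move, so that $e_0$ never coincides with it --- but you did not do this.) A secondary, also genuine, problem is your claim that a padding edge at $v^{\ast}$ is always available: Player 1 can deliberately exhaust the $n$ edges at $v^{\ast}$ early (e.g.\ for $n=4$, playing three $v^{\ast}$-edges in his first three moves), after which Player 2 owes a pad she cannot make; this is repairable by the standard ``claim an arbitrary edge and ignore it'' device together with \Cref{monotonicity}, but that argument is missing.

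For comparison, the paper's proof avoids every one of these issues with a single application of monotonicity: after Player 1 claims $uv$, Player 2 simply \emph{gives} Player 1 all $n$ edges incident to $v$. The remaining board is then exactly a fresh $K_n$ with Player 2 to move first, so she runs $S$ verbatim with no turn-order conflict, no padding, and no phantom. The gift costs exactly one unit ($\omega(G_R)=\omega(G_R')+1$ since $v$ is red-adjacent to everything), which is precisely cancelled by the strict inequality $\omega(G_B')>\omega(G_R')$, giving $\omega(G_B)=\omega(G_B')\ge\omega(G_R')+1=\omega(G_R)$. Your instinct to steal Player 1's $\Clique(n)$ strategy is the right one, but the clean way to pay for Player 2's lost tempo is to concede the entire star at $v$ up front rather than to interleave a simulation.
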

	\begin{proof}
		Suppose $n\geq 2$ is such that Clique$(n)$ is a Player 1 win. We use a strategy stealing argument, in the sense that in order for Player 2 to win the Clique$(n+1)$ game, they will make use of the winning strategy of \emph{Player~1} from the Clique$(n)$ game. To avoid confusion, we find it helpful at this stage to introduce names for our two players, namely Red and Blue, where we use ``she" to refer to Red and ``he" for Blue.  Specifically, we will consider the Clique$(n+1)$ game with Red going first and will show that Blue has a winning strategy (while playing as the second player).
		
		Red can only ever begin the Clique$(n+1)$ game by claiming some arbitrary edge, say $uv$. By monotonicity of the game (see \Cref{monotonicity}), Blue may at this point choose to give more edges to his opponent. In particular, he now chooses to give all the edges $\{vw : \ w\in V(K_{n+1}) \}$ to Red. At this stage, the remaining edges that have not been claimed by either player form precisely the edge set of a $K_n$, and it is Blue's turn to move (see \Cref{fig1}). By our assumption on $n$, we know that Blue has a winning strategy in the Clique$(n)$ game as the \emph{first} player, i.e. he can build a strictly larger clique than his opponent inside this $K_n$. Blue now follows this strategy until the end of the game.
		
		\begin{figure}[htb]
			\captionsetup{justification=centering}
			\centering
			\begin{tikzpicture}[scale=0.8, cross/.style={path picture={ 
						\draw[black]
						;
				}}]
				\def\a{1.5} 
				\def\b{2} 
				\def\q{4} 
				\def\x{{\a^2/\q}} 
				\def\y{{\b*sqrt(1-(\a/\q)^2}} 
				\coordinate (O) at (0,0); 
				\coordinate (Q) at (-\q,0); 
				\coordinate (P) at (-\x,\y); 
				\coordinate (P') at (-\x,-\y);
				
				\draw[thick] (O) ellipse({\a} and {\b});
				\draw[red,thick] ($(Q)!-0!(P)$) -- ($(Q)!1!(P)$);
				\draw[red,thick] ($(Q)!-0!(P')$) -- ($(Q)!1!(P')$);
				\draw[red, thick] (-4, 0) -- (-1.14, 1.3);
				\draw[red, thick] (-4, 0) -- (-1.35, 0.9);
				\fill[] (Q) circle(0.05) node[below left] {v};
				\node at (-2.5, 0) {\dots};
				
				\node at (0, 0) {$K_n$};
				
			\end{tikzpicture}
			\caption{The game state before Blue claims his first edge. We are guaranteed a strictly larger Blue clique than Red inside the $K_n$ subgraph indicated above when Blue follows Player 1's winning strategy from $\Clique(n)$.}\label{fig1}
		\end{figure}
		
		Let $G_R,G_B$ be the graphs claimed by Red and Blue in $K_{n+1}$ at the end of this game, and let $G'_R,G'_B$ be the subgraphs they claim in the $K_n$ subgraph $K_{n+1}\setminus\{v\}$.  Because Blue used the winning strategy for $\Clique(n)$ in $K_{n+1}\setminus\{v\}$, we must have $\omega(G'_B)>\omega(G'_R)$.  Note also that $\omega(G_R)=\omega(G'_R)+1$ since $v$ is adjacent to every vertex of $G'_R$ by construction, and also $\omega(G_B)=\omega(G_B')$.  We conclude that 
		\[\omega(G_B)=\omega(G_{B}')\ge \omega(G_{R}')+1=\omega(G_R),\]
		and hence Blue wins the Clique$(n+1)$ game as Player 2 regardless of how Red plays.
	\end{proof}
	
	We next prove that if Clique$(n)$ is a Player 1 win for some $n\geq 2$, then Clique$(n+2)$ is a Player 2 win.  A referee pointed out to us that there is in fact a very simple proof of this result which we sketch here: 
	Below we present our 
	
	\begin{proposition}\label{clique+2}
		If  Clique$(n)$ is a Player 1 win for some $n\geq 2$, then Clique$(n+2)$ is a Player 2 win.
	\end{proposition}
	After submitting our paper, a referee pointed out that there is in fact a relatively simple proof of this result which we sketch now: if Player 1 starts with some edge $xy\in E(K_{n+2})$, then Player 2 can use a strategy stealing argument on the remaining $n$ vertices except when Player 1 picks an edge of the form $xz$ or $yz$, in which case Player 2 picks the edge $yz$ or $xz$, respectively.  With this, $x,y$ can not belong to a largest clique of Player 1 (unless this largest clique has size 2, in which case they will lose the game), and hence the largest clique of Player 1 is at most one more than their largest clique in $K_{n+2}\setminus \{x,y\}$, which by the strategy stealing argument is in total at most that of Player 2's largest clique.
	
	Below we present our original (and more complicated) proof of \Cref{clique+2}, largely because this serves as a warmup to the more complicated argument we make in \Cref{clique+3}.
	
	\begin{proof}
		The result is straightforward to prove for $n=2$, so suppose $n\ge 3$ is such that Clique$(n)$ is a Player 1 win.  We will prove $\Clique(n+2)$ is a Player 2 win by making use of a strategy stealing argument (similar to the previous proof) together with a case analysis of Player 1's second move.  As before, we introduce the names Red and Blue for the two players and show that if Red plays first in the Clique$(n+2)$ game, then Blue has a winning strategy (as Player 2) in this game.
		
		Consider the configuration $S_1$ in the Clique$(n+2)$ game, where $v_1$ and $v_2$ have all the edges to the $K_n$ present in red, as depicted in \Cref{fig2}: 
		\begin{figure}[htb]
			\captionsetup{justification=centering}
			\centering
			\begin{tikzpicture}[scale=0.8, cross/.style={path picture={ 
						\draw[black]
						;
				}}]
				\def\a{1.5} 
				\def\b{2} 
				\def\q{4} 
				\def\x{{\a^2/\q}} 
				\def\y{{\b*sqrt(1-(\a/\q)^2}} 
				\coordinate (O) at (-1,0); 
				\coordinate (Q) at (-\q,-1); 
				\coordinate (Q') at (-\q, 1);
				\coordinate (P) at (-\x,\y); 
				\coordinate (P') at (-\x,-\y);
				
				\draw[thick] (O) ellipse({\a} and {\b});
				\draw[red, thick] (-4, 1) -- (-3.2, 1.2);
				\draw[red, thick] (-4, 1) -- (-3.2, 1);
				\draw[red, thick] (-4, 1) -- (-3.2, 0.8);
				\node at (-3.2, 0.6) {\dots};
				\draw[red, thick] (-4, -1) -- (-3.2, -1.2);
				\draw[red, thick] (-4, -1) -- (-3.2, -1);
				\draw[red, thick] (-4, -1) -- (-3.2, -0.8);
				\node at (-3.2, -0.6) {\dots};
				\draw[red, thick] (-4, 1) -- (-3.2, 0.2);
				\draw[red, thick] (-4, -1) -- (-3.2, -0.2);
				\fill[] (Q) circle(0.05) node[below left] {$v_1$};
				\fill[] (Q') circle(0.05) node[below left] {$v_2$};
				\draw[blue, thick] (-4, -1) -- (-4, 1);
				\fill[] (-1.3, -1.3) circle(0.05) node[below left] {$u_1$};
				\fill[] (-0.9, -0.8) circle(0.05) node[below right] {$u_2$};
				\draw[blue, thick] (-1.3, -1.3) -- (-0.9, -0.8);
				
				\node at (-1, 0) {$K_n$};
				
			\end{tikzpicture}
			\caption{The position $S_1$.}\label{fig2}
		\end{figure}
		
		\begin{claim*}
			If Red and Blue alternate claiming one edge starting from $S_1$ with Red going first, then Blue has a strategy to end the game with a clique of size at least as large as that of Red.
		\end{claim*}
		\begin{subproof}
			Note that the restriction of $S_1$ to the $K_n$ subgraph $K_{n+2}\setminus\{v_1, v_2\}$ pictured above is precisely an instance of the Clique$(n)$ game in which Blue has made the first move, and no other move has been played yet. By our assumption that Clique$(n)$ is a Player 1 win, it follows that Blue has a strategy which guarantees him a strictly larger clique than any of Red's cliques inside the $K_n$ subgraph at the end of the game. Re-adding the vertices $v_1$ and $v_2$ to this $K_n$ and noting that the edge $v_1v_2$ is blue, we see that Red's largest clique can increase in size by at most 1 in $K_{n+2}$, so it can be at most equal to Blue's largest clique.   
		\end{subproof}
		
		It remains to show that Blue, playing second starting from the empty board on $K_{n+2}$, can always arrive at state $S_1$. Red necessarily starts by playing an arbitrary edge $u_1v_1$ of $K_{n+2}$. Blue's strategy then is to play an edge $u_1u_2$ incident to the one Red has just played. There are then 5 cases (up to isomorphism) for Red's next move as depicted below:
		\begin{figure}[htb]
			\captionsetup{justification=centering}
			\begin{center}
				\begin{tikzpicture}
					\draw[red, thick] (1, 0) -- (0, 0) -- (0, 1);
					\draw[blue, thick] (0, 1) -- (1, 1);
					\fill[] (0, 1) circle(0.05) node[left] {$u_1$};
					\fill[] (1, 1) circle(0.05) node[right] {$u_2$};
					\fill[] (0, 0) circle(0.05) node[left] {$v_1$};
					\fill[] (1, 0) circle(0.05) node[right] {};
					
					\draw[red, thick] (3, 1) -- (3, 0) -- (4, 1);
					\draw[blue, thick] (4, 1) -- (3, 1);
					\fill[] (3, 1) circle(0.05) node[left] {$u_1$};
					\fill[] (4, 1) circle(0.05) node[right] {$u_2$};
					\fill[] (3, 0) circle(0.05) node[left] {$v_1$};
					
					\draw[red,thick] (6, 1)--(6, 0);
					\draw[red, thick] (7, 1)--(7, 0);
					\draw[blue,thick] (6,1)--(7,1);
					\fill[] (6, 1) circle(0.05) node[left] {$u_1$};
					\fill[] (7, 1) circle(0.05) node[right] {$u_2$};
					\fill[] (6, 0) circle(0.05) node[left] {$v_1$};
					\fill[] (7, 0) circle(0.05) node[right] {};
					
					\draw[red, thick] (9, 0) -- (9, 1);
					\draw[red,thick] (10, 0) -- (9, 1);
					\draw[blue, thick] (10, 1)-- (9, 1);
					\fill[] (9, 1) circle(0.05) node[left] {$u_1$};
					\fill[] (10, 1) circle(0.05) node[right] {$u_2$};
					\fill[] (9, 0) circle(0.05) node[left] {$v_1$};
					\fill[] (10, 0) circle(0.05) node[right] {};
					
					\draw[red, thick] (13.7, 0) -- (13.7, 1);
					\draw[red, thick] (12,1) -- (12,0);
					\draw[blue, thick] (12, 1) -- (13, 1);
					\fill[] (12, 1) circle(0.05) node[left] {$u_1$};
					\fill[] (13, 1) circle(0.05) node[right] {$u_2$};
					\fill[] (12, 0) circle(0.05) node[left] {$v_1$};
					\fill[] (13.7, 0) circle(0.05) node[right] {};
					\fill[] (13.7, 1) circle(0.05) node[right] {};
				\end{tikzpicture}
			\end{center}
			\caption{The five possible game states after Red's second move given that Blue played his first move incident to Red's first move.}\label{fig:5States}
		\end{figure}
		
		It is not difficult to see that each of these game states are (coloured) subgraphs of $S_1$; indeed, one can see this by taking $v_2$ to be an isolated vertex in the first and second pictures (which exists since $n+2\ge 5$) and by taking $v_2$ to be any of the unlabelled vertices in the last three pictures. Indeed, in all five of these cases, every red edge is incident to precisely one of $v_1$ and $v_2$.  With this, we see that Blue can play his next move by adding a blue edge between these two vertices identified as $v_1,v_2$ and then strengthen his opponent by giving up to Red all the remaining edges needed in order to arrive at exactly state $S_1$.  This proves that Blue can always reach position $S_1$, and the result follows by the above claim.
	\end{proof}
	
	\begin{proposition}\label{clique+3}
		If  Clique$(n)$ is a Player 1 win for some $n\geq 25$, then Clique$(n+3)$ is a Player 2 win.
	\end{proposition}
	\begin{proof}
		We use strategy stealing together with a case analysis of Player 1's second move and a mirroring strategy.  Suppose $n$ is such that Clique$(n)$ is a Player 1 win. As usual, we will play the Clique$(n+3)$ game with Red going first and show that Blue has a winning strategy.  However, unlike in the previous two proofs, we will not be able to guarantee that we reach our ``good'' game state $S$ after some fixed number of moves, and in particular, we will need to allow for the $K_n$ subgraph of $K_{n+3}$ to be essentially any graph that comes about from Blue playing the winning strategy as Player 1 in $\Clique(n)$.
		
		With this in mind, fix some arbitrary winning strategy $\mathcal{S}$ of Player 1's in $\Clique(n)$ and define a 2-coloured subgraph $H\subseteq K_n$ to be \emph{good} if (1) $H$ occurs as a game state at some point during a run of the Clique$(n)$ game in which Blue goes first and follows $\mathcal{S}$ for $\Clique(n)$; and (2) $H$ is such that either Blue has claimed one more edge than Red or all edges have been claimed.
		
		For such a graph $H$, we define the following game state $S_2(H)$ as a (coloured) subgraph of $K_{n+3}$, in which each of the vertices $v_1$ and $v_2$ have the same three red neighbors in $H$ and $u$ has red degree equal to $n$ into $H$ - see \Cref{fig4}.
		
		\begin{figure}[htb]
			\captionsetup{justification=centering}
			\centering
			\begin{tikzpicture}
				
				\draw[red,thick] (1, 1)--(1, -1);
				\draw[blue,thick] (1,1)-- (0, 0)--(1, -1);
				\draw[red, thick] (0, 0) -- (0.5, 0.3);
				\draw[red, thick] (0, 0) -- (0.5, 0.15);
				\node at (0.5, 0) {$\dots$};
				\draw[red, thick] (0, 0) -- (0.5, -0.3);
				
				\draw[thick,black,pattern=north east lines,pattern color=gray] (3.5, 0) ellipse({1} and {1.8});
				
				\draw[red, thick] (1, 1) -- (2.55, 0.5);
				\draw[red, thick] (1, -1) -- (2.55, 0.5);
				\fill[] (2.55, 0.5) circle(0.05) {};
				
				\draw[red, thick] (1, 1) -- (2.55, -0.5);
				\draw[red, thick] (1, -1) -- (2.55, -0.5);
				\fill[] (2.55, -0.5) circle(0.05) {};
				
				\draw[red, thick] (1, 1) -- (2.5, 0);
				\draw[red, thick] (1, -1) -- (2.5, 0);
				\fill[] (2.5, 0) circle(0.05) {};
				
				\node at (3.5, 0) {$H$};
				\fill[] (0, 0) circle(0.05) node[above left] {u};
				\fill[] (1, 1) circle(0.05) node[above left] {$v_1$};
				\fill[] (1, -1) circle(0.05) node[below left] {$v_2$};
			\end{tikzpicture}
			\caption{The position $S_2(H)$.}\label{fig4}
		\end{figure}
		
		\begin{claim*}
			If $H$ is a good state and if Red and Blue alternate claiming one edge starting from $S_2(H)$ with Red going first, then Blue has a strategy to end the game with a clique of size at least as large as that of Red.
		\end{claim*}
		\begin{subproof}
			Blue plays as follows. Every time Red makes a move inside the subgraph $K_n=K_{n+3}\setminus\{ u, v_1, v_2\}$, Blue responds according to the winning strategy $\mathcal{S}$ of Player 1 for the Clique$(n)$ game (which he can do, since $H$ is a good state). If Red has just claimed the last edge of $K_n$, then Blue skips his turn (which is allowed by \Cref{monotonicity}). Whenever Red instead makes a move of the type $v_iw$ for some $i\in\{1, 2\}$ and $w\in K_n$, Blue responds by playing $v_{3-i}w$. This sequence of play continues until all edges of $K_{n+3}$ have been claimed. 
			
			Let $G_R$ and $G_B$ be the graphs claimed by Red and Blue respectively at the end of this game, and let $G_R'$ and $G_B'$ be the subgraphs claimed by Red and Blue on the vertex set $V(K_{n+3})\setminus\{u, v_1, v_2\}$ respectively.  By assumption, we know that \[\omega(G_B')\ge \omega(G_R')+1.\]  By definition of $S_2(H)$ and the strategy used above, $v_1,v_2$ will end with at most three common neighbors in $G_R$, and therefore
			\[\omega(G_R)\le \max\{\omega(G_R')+1,5\}\]
			since any clique in $G_R$ must either have size at most 5 or use at most one of the three vertices $u,v_1,v_2$.  Finally, because the Ramsey number $R(4,5)$ is well known to equal $25\le n$, we must have \[\omega(G_B)\geq\omega(G_B')=\max\{\omega(G_B'),\omega(G_R')\}\geq 5.\]
			
			Combining the three inequalities above yields $\omega(G_B)\ge \omega(G_R)$, proving the result.
		\end{subproof}
		
		Using the above claim, it remains to show that Blue can always arrive in the state $S_2(H)$ for some good $H$ while playing the Clique$(n+3)$ game as Player 2.  As in the proof of \Cref{clique+2}, Blue will play his first edge $xy$ incident to the first edge that Red plays, implying that after Red's second move, the game will be in one of the five (unlabeled) states depicted in \Cref{fig:5States}.  We aim to show that in each of these possible states, Blue can play in such a way that the game can eventually be embedded into $S_2(H)$ for some good $H$. To this end, we define $z$ to be any vertex which is not in a red edge with either $x$ or $y$ and which is incident to every red edge not containing $x$ or $y$; see \Cref{fig5} below for how $z$ is defined in each of the five possible states.
		\begin{figure}[htb]
			\captionsetup{justification=centering}
			\begin{center}
				\begin{tikzpicture}
					\draw[blue, thick] (1, 1)-- (0, 1);
					\draw[red, thick] (0,1)-- (0, 0) -- (1, 0);
					\fill[] (1, 1) circle(0.05) node[above right] {$y$};
					\fill[] (0, 1) circle(0.05) node[above left] {$x$};
					\fill[] (1, 0) circle(0.05) node[below right] {$z$};
					\fill[] (0, 0) circle(0.05) node[below right] {};
					\draw[red, thick] (3, 1) -- (3, 0) -- (4, 1);
					\draw[blue, thick] (4, 1) -- (3, 1);
					\fill[] (3, 1) circle(0.05) node[above left] {$x$};
					\fill[] (4, 1) circle(0.05) node[above right] {$y$};
					\fill[] (4, 0) circle(0.05) node[below] {$z$};
					\fill[] (3, 0) circle(0.05) node[below right] {};
					
					\draw[red,thick] (6, 1)--(6, 0);
					\draw[red, thick] (7, 1)--(7, 0);
					\draw[blue,thick] (6,1)--(7,1);
					\fill[] (6, 1) circle(0.05) node[above left] {$x$};
					\fill[] (7, 1) circle(0.05) node[above right] {$y$};
					\fill[] (6.5, 0) circle(0.05) node[below] {$z$};
					\fill[] (6, 0) circle(0.05) node[below right] {};
					\fill[] (7, 0) circle(0.05) node[below right] {};
					
					\draw[red, thick] (10, 0) -- (9, 1);
					\draw[red,thick] (9, 0) -- (9, 1);
					\draw[blue, thick] (10, 1)-- (9, 1);
					\fill[] (9, 1) circle(0.05) node[above left] {$x$};
					\fill[] (10, 1) circle(0.05) node[above right] {$y$};
					\fill[] (9, 0) circle(0.05) node[below] {};
					\fill[] (10, 0) circle(0.05) node[below right] {};
					\fill[] (10, 0.5) circle(0.05) node[right] {$z$};
					
					\draw[red, thick] (13.7, 0) -- (13.7, 1);
					\draw[red, thick] (12,1) -- (12,0);
					\draw[blue, thick] (12, 1) -- (13, 1);
					\fill[] (12, 1) circle(0.05) node[above left] {$x$};
					\fill[] (13, 1) circle(0.05) node[above right] {$y$};
					\fill[] (12, 0) circle(0.05) node[] {};
					\fill[] (13.7, 0) circle(0.05) node[right] {$z$};
					\fill[] (13.7,1) circle(-.05) node[] {};
					
				\end{tikzpicture}
				\caption{The five possible game states depicted in \Cref{fig:5States} together with an appropriate choice for the vertex $z$.}\label{fig5}
			\end{center}
		\end{figure}
		
		The vertices denoted by $x, y$ and $z$ will ultimately play the roles of $v_1, v_2$ and $u$, in some order to be decided later. Blue's strategy from here onwards is as follows: play as the first player in the Clique$(n)$ game on the sub-board $K_{n+3}\setminus\{x, y, z\}$ (skipping his turn if Red just claimed the last edge of this $K_n$) until Red claims some edge $e$ incident to (at least) one of the vertices $x,y,z$.  From here, Blue claims one of the two edges $xz$ or $yz$ (which is possible since at least one of these edges does not equal $e$ and since neither of these edges were claimed earlier, by how we defined $z$ not to be in any edge with $x,y$). 
		He then labels whichever vertex in $\{x,y\}$ has blue degree 2 as $u$ and the other two vertices in $\{x,y,z\}$ as $v_1,v_2$, and finally Blue gives up to Red all the remaining edges that are necessary to arrive at state $S_2(H)$, where $H$ is defined to be the sub-board $K_{n+3}\setminus\{x, y, z\}$ before edge $e$ was claimed.  Note that this last step is possible since by construction, the set $\{x,y,z\}$ sees at most 3 total red edges after $e$ is claimed, and hence this set in total has at most 3 red edges into $K_n$.  This establishes that Blue can always arrive at $S_2(H)$ for some good $H$, proving the result by the claim.
	\end{proof}
	
	\begin{remark}
		It is possible to extend \Cref{clique+3} to all $n\geq 2$.  For example, one can easily achieve $n\ge 9$ by noticing that we can always achieve a state $S_2'(H)$ in which the vertices $v_1$ and $v_2$ only share \emph{two} red neighbors in $H$ together with the fact that $R(3,4)=9$. Indeed, in the very last step of the proof, we can have Blue claim one of $xz$ and $yz$ in such a way that there are at most two red edges between $\{x, y, z\}$ and the other vertices.  In fact, one can even ensure that $v_1,v_2$ either have a blue edge between them or at most \textit{one} red neighbor by being slightly more careful with the strategy used to push the bound all the way to $n\ge 2$, but this approach requires us to deviate from the notation of the current proof.  We choose not to give all the details of this analysis here for ease of reading. 
	\end{remark}
	
	We now formally put these propositions together to prove our main result.
	\begin{proof}[Proof of \Cref{clique}]
		By the propositions above, for each integer $n\ge 25$, at least three of the numbers in $\{n,n+1,n+2,n+3\}$ are such that $\Clique(n)$ is a Player 2 win.  It follows then that the set of winning values for Player 2 has asymptotic density at least $3/4$.
	\end{proof}
	
	\section{The star-building game}
	
	Similarly to the clique-building game $\Clique(n)$, we establish \Cref{star} by proving that if Star$(n)$ is a Player 1 win for some $n\geq3$, then Star$(n+1)$ and Star$(n+2)$ are Player 2 wins.
	
	\begin{proposition}\label{star+1}
		If Star$(n)$ is a Player 1 win for some $n\geq 3$, then Star$(n+1)$ is a Player 2 win.    
	\end{proposition}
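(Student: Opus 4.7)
The plan is to adapt the strategy-stealing argument from \Cref{clique+1}: after Red plays her first edge $R_1 = uv$, Blue will designate $v$ as the ``extra'' vertex and treat $K_n := K_{n+1} \setminus \{v\}$ as a sub-board on which he plays the Player 1 winning strategy for $\Star(n)$, while mirroring Red's subsequent at-$v$ moves with at-$v$ moves of his own. Unlike in the clique case, however, Blue cannot afford to simply give Red all the at-$v$ edges via monotonicity, since that would force $\deg_{G_R}(v) = n$ which Blue could never match given $\Delta(G_B) \le n - 1$; the edges at $v$ must instead stay split roughly evenly between the two players.

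Concretely, Blue would play $B_1$ in $K_n$ as Player 1's opening move of a virtual $\Star(n)$ game, and for each $i \ge 2$: if $R_i$ lies at $v$, Blue replies with an arbitrary unclaimed at-$v$ edge, and if $R_i$ lies in $K_n$, Blue replies using Player 1's winning-strategy response for $\Star(n)$ on $K_n$. When $n$ is odd, a direct parity check shows that this pairing works perfectly, yielding $r_v := \deg_{G_R}(v) = (n+1)/2$ and $b_v := \deg_{G_B}(v) = (n-1)/2$, with the virtual $\Star(n)$ game on $K_n$ being a clean alternating game in which Blue moves first. When $n$ is even the pairing would be off by a single turn and force Blue to play one unpaired extra edge in $K_n$, breaking the alternation needed for the Player 1 strategy; I would fix this by invoking \Cref{monotonicity} to donate one arbitrary additional at-$v$ edge to Red at the very start, which restores $r_v = n/2 + 1$, $b_v = n/2 - 1$ and the proper alternation.

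Writing $G'_R, G'_B$ for the restrictions of $G_R, G_B$ to $V(K_n)$, attaching $v$ can increase any other vertex's red degree by at most one, so $\Delta(G_R) \le \max(r_v,\; \Delta(G'_R) + 1)$. The virtual game gives $\Delta(G'_B) > \Delta(G'_R)$ by hypothesis, and a short pigeonhole bound on Player 2's edge count in $\Star(n)$ forces Player 2's maximum degree to be at least $\lceil (n-1)/2 \rceil$, which together with the Player 1 win upgrades this to $\Delta(G'_B) \ge \lceil (n+1)/2 \rceil = r_v$. Combining, $\Delta(G_B) \ge \Delta(G'_B) \ge \max(r_v,\; \Delta(G'_R) + 1) \ge \Delta(G_R)$, which is exactly the condition for Blue to win. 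The most delicate part of the argument, which I expect to be the bulk of the work, is the parity bookkeeping: one must carefully verify that the at-$v$ pairing, combined with the monotonicity trick for even $n$, really does leave $K_n$ as a properly alternating virtual $\Star(n)$ game with Blue moving first, against any strategy Red might adopt.
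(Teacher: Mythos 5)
Your proposal is correct and follows essentially the same route as the paper: isolate the vertex $v$ of Red's first edge, play Player 1's winning $\Star(n)$ strategy on $K_{n+1}\setminus\{v\}$ while pairing off the edges at $v$, and close with the observation that $\Delta(G'_B)\ge \lceil (n+1)/2\rceil = \deg_{G_R}(v)$ via an averaging/parity count. The only cosmetic difference is your extra donation of one at-$v$ edge to Red when $n$ is even; the paper instead absorbs all such parity mismatches by letting Blue skip turns (justified by monotonicity), which yields the same value of $\deg_{G_R}(v)=1+\lceil(n-1)/2\rceil$.
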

	\begin{proof}
		Suppose $n$ is such that Star$(n)$ is a Player 1 win. We play the Star$(n+1)$ game with Red going first and show that Blue has a winning strategy as Player 2.  Call Red's first edge $uv$. Blue then chooses to play some arbitrary edge not incident to either $u$ nor $v$, which exists since $n+1\ge 4$ - see \Cref{fig6}.
		\begin{figure}[htb]
			\captionsetup{justification=centering}
			\begin{center}
				\begin{tikzpicture}[scale=0.8, cross/.style={path picture={ 
							\draw[black]
							;
					}}]
					\def\a{1.5} 
					\def\b{2} 
					\def\q{4} 
					\def\x{{\a^2/\q}} 
					\def\y{{\b*sqrt(1-(\a/\q)^2}} 
					\coordinate (O) at (0,0); 
					\coordinate (Q) at (-\q,0); 
					\coordinate (P) at (-\x,\y); 
					\coordinate (P') at (-\x,-\y);
					
					\draw[thick] (O) ellipse({\a} and {\b});
					
					\draw[red, thick] (-4, 0) -- (-1.14, 1.3);
					\draw[blue, thick] (-1,0.3) -- (-0.2, 0.8);
					\fill[] (-1, 0.3) circle(0.05);
					\fill[] (-0.2, 0.8) circle(0.05){};
					\fill[] (Q) circle(0.05) node[below left] {v};
					\fill[] (-1.14, 1.3) circle(0.05) node[right] {u};
					\node at (0, 0) {$K_n$};
					
				\end{tikzpicture}
				\caption{The game state after Blue claims his first edge.}\label{fig6}
			\end{center}
		\end{figure}
		
		From here on, Blue plays as follows. Whenever Red makes a move inside the $K_n$ subgraph not containing the vertex $v$, then Blue responds according to the winning strategy of Player 1 in the Star$(n)$ game. If Red has just claimed the last free edge of the $K_n$ subgraph, then Blue simply skips his turn. Whenever Red instead plays an edge incident to $v$, Blue responds by picking another, unclaimed edge also incident to $v$ arbitrarily (unless all edges incident to $v$ have been claimed, in which case Blue again skips his turn). We claim that this is a winning strategy for Blue as the second player.
		
		In order to see this, let $G_R,G_B$ be the graphs claimed by these two players at the end of the game and $G_R',G_B'$ the graphs they claim in $K_{n+1}\setminus\{v\}$.  The two key observations that we will need about our strategy outlined above are
		\begin{equation}\label{eq:vDeg}\deg_{G_R}(v)=1+\lceil (n-1)/2\rceil,\end{equation}
		since Red claims the first edge on $v$ together with half (rounded up) of the remaining $n-1$ edges incident to $v$; and that
		\[\Delta(G'_B)>\Delta(G'_R),\]
		because Blue plays in such a way that he wins the Star$(n)$ game played on the $K_n$ sub-board as Player 1.

		Let $w$ be a vertex of $K_n$ (i.e.\ not $v$) such that $\deg_{G_B'}(w)=\Delta(G'_B)$.  The inequality above then implies that
		\[\deg_{G_B}(w)\ge \deg_{G'_B}(w)\ge \max_{x\in K_n}\deg_{G'_R}(x)+1\ge \max_{x\in K_n}\deg_{G_R}(x),\]
		with this last step using that the degree of any $x\in K_n$ can increase by at most 1 from $G_R'$ to $G_R$ due to $v$.  With this, we see that the only possible way for Blue to lose is if we have $\deg_{G_R}(v)>\deg_{G_B}(w)$. To show that this cannot happen, we distinguish two cases based on the parity of $n$.
		
		\textit{Case 1: $n=2k$.}  By \eqref{eq:vDeg} we have  $\deg_{G_R}(v)=k+1$. However, we cannot have $\deg_{G_B'}(w)\leq k$, since this would imply that every vertex of the $K_n$ sub-board sees precisely $k$ blue edges and $k-1$ red edges (since otherwise Red would have a vertex of degree at least $\deg_{G_B'}(w)=\Delta(G_B')$).  In total this would mean that the number of red edges claimed in this sub-board is only $\frac{1}{2}(k-1)n=(k-1)k$, contradicting the fact that the number of red edges is exactly \[\left\lfloor \frac{1}{2} {n\choose 2}\right\rfloor=\left\lfloor \frac{1}{2}(2k-1)k\right\rfloor>(k-1)k\]
		with this last inequality holding for all $k\ge 2$.  We conclude that $\deg_{G'_B}(w)\ge k+1=\deg_{G_R}(v)$ as desired.
		
		\textit{Case 2:} $n=2k+1$. In this case $\deg_{G_R}(v)=k+1$, and again we cannot have $\deg_{G'_B}(w)\leq k$ as this would imply $\deg_{G_B'}(w)\le \deg_{G_R'}(w)$, a contradiction to the fact that $\deg_{G_B'}(w)>\deg_{G_R'}(w)$.  We again conclude that $\deg_{G'_B}(w)\ge k+1=\deg_{G_R}(v)$, proving the result.
	\end{proof}
	
	\begin{proposition}\label{star+2}
		If Star$(n)$ is a Player 1 win for some $n\geq 3$, then Star$(n+2)$ is a Player 2 win.      
	\end{proposition}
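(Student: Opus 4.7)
The plan is to follow the template of \Cref{star+1}, adapted for two extra vertices. Assume $\Star(n)$ is a Player~1 win and play $\Star(n+2)$ with Red going first; we aim to construct a winning strategy for Blue. Based on Red's opening edge $e_1 = ab$, Blue designates two ``extra'' vertices $v_1, v_2 \in V(K_{n+2})$, with $v_1 = a$ and $v_2$ some vertex distinct from $b$ (the precise identity of $v_2$ being postponed a turn or two, in a manner reminiscent of the case analysis in \Cref{clique+2}). This ensures $e_1 = v_1 b$ is incident to $v_1$ but not contained in the sub-board $K_n := K_{n+2} \setminus \{v_1, v_2\}$. Blue's strategy on $K_n$ will be to play Player~1's winning strategy for $\Star(n)$, available by hypothesis. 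For the $v$-edges between $\{v_1, v_2\}$ and $K_n$, Blue uses the pairing $\{v_1 w, v_2 w\}$ for each $w \in K_n$ and mirrors: whenever Red plays one element of such a pair, Blue responds with the other. Finally, Blue arranges for $v_1 v_2$ to be blue by playing it at an opportune moment, using monotonicity (gifting edges to Red when needed) to make the bookkeeping of moves consistent.

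Assuming Blue can reach a final configuration with the properties above, the degree analysis is clean. For each $w \in K_n$ the pair $\{v_1 w, v_2 w\}$ contributes exactly one edge to each player's graph, so $\deg_{G_R}(w) = \deg_{G_R|_{K_n}}(w) + 1$ and $\deg_{G_B}(w) = \deg_{G_B|_{K_n}}(w) + 1$; combined with $\Delta(G_B|_{K_n}) \geq \Delta(G_R|_{K_n}) + 1$ (from Blue's winning play in the $\Star(n)$ sub-game) this gives $\max_{w \in K_n} \deg_{G_B}(w) \geq \max_{w \in K_n} \deg_{G_R}(w) + 1$. For the extra vertices, $v_1v_2$ being blue and each pair being split imply $\deg_{G_R}(v_1) + \deg_{G_R}(v_2) = n$ and $\deg_{G_B}(v_1) + \deg_{G_B}(v_2) = n+2$; together with $\deg_{G_R}(v_i) + \deg_{G_B}(v_i) = n+1$ for $i=1,2$, this yields $\max_i \deg_{G_B}(v_i) = \max_i \deg_{G_R}(v_i) + 1$. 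Combining the two comparisons gives $\Delta(G_B) \geq \Delta(G_R) + 1$, which suffices for Blue (Player~2) to win.

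The main technical obstacle lies in verifying that Blue can in fact reach such a final configuration. The difficulty is that simultaneously claiming $v_1 v_2$ and acting as Player~1 in the $\Star(n)$ sub-game on $K_n$ (which requires Blue to take the first $K_n$-edge) is in tension with Blue's total move budget in the overall game: a naive execution of the strategy leaves Blue one move short. We expect this bookkeeping to be the heart of the argument, and plan to address it by a finer case analysis on Red's first two moves --- following the template of \Cref{clique+2} --- together with judicious invocations of \Cref{monotonicity} to gift edges to Red so as to align the parity of moves in the sub-game. Once this is done, the two displayed degree comparisons above immediately give the result.
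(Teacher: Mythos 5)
There is a genuine gap, and it sits exactly where you deferred the work. Your target configuration --- $v_1v_2$ blue, every pair $\{v_1w,v_2w\}$ split one--one between the players, and Blue opening the $\Star(n)$ sub-game on $K_n$ --- is not reachable, and the tempo deficit you flag cannot be repaired by \Cref{monotonicity}: monotonicity only lets Blue \emph{skip} turns or \emph{donate} edges to Red, so it can never buy Blue the extra move he is missing. Worse, your choice $v_1=a$ forces Red's opening edge $v_1b$ to be a red element of the pair $\{v_1b,v_2b\}$ claimed before any pairing is in place, so even in the best case the final configuration has at least one ``pre-existing'' red edge from $\{v_1,v_2\}$ into $K_n$ that is not matched by the naive pairing. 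The paper's resolution is structural rather than bookkeeping: Blue plays his first edge incident to Red's first edge, waits for Red's second move, and only then chooses $v_1,v_2$ so that \emph{both} of Red's edges land incident to $\{v_1,v_2\}$ (with distinct other endpoints $w_1,w_2$); Blue's two moves are the sub-game opener $u_1u_2$ and the edge $v_1v_2$, and the pairing is modified on the exceptional vertices --- e.g.\ the pair $(v_1w_2,v_2w_1)$ or $(v_1w_1,v_1w_2)$ depending on whether the two red edges are split between $v_1,v_2$ or both at one of them. This modified pairing is what delivers the identity actually needed at $v_1,v_2$, namely $\deg_{G_R}(v_1)=\deg_{G_B}(v_2)$; your sum identities $\deg_{G_R}(v_1)+\deg_{G_R}(v_2)=n$ and $\deg_{G_B}(v_1)+\deg_{G_B}(v_2)=n+2$ do not hold in any reachable configuration (the two initial red edges push both sums to $n+1$).

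The second, downstream gap is in the degree analysis on $K_n$. Because of those two initial red edges, it is \emph{not} true that every $w\in K_n$ ends with exactly one red and one blue edge into $\{v_1,v_2\}$: exactly one vertex $y\in\{w_1,w_2\}$ can end with two red edges there, so $\deg_{G_R}(y)=\deg_{G_R'}(y)+2$ while $\deg_{G_B}(y)=\deg_{G_B'}(y)$. If $y$ happens to be the vertex of maximum blue degree in the sub-board, the clean ``$+1$ versus $+1$'' comparison fails and one needs $\deg_{G_B'}(y)\ge\deg_{G_R'}(y)+2$, which the paper extracts from a parity/edge-count argument split on $n$ even or odd (as in Case 1 of the proof of \Cref{star+1}). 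This case analysis is real mathematical content, not bookkeeping, and is absent from your proposal. Your overall skeleton (two extra vertices, a pairing strategy on the cross edges, Player 1's $\Star(n)$ strategy on the sub-board) is the right one, but both of the points you would need to make it work are precisely the ones left open or asserted incorrectly.
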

	\begin{proof}
		Suppose $n$ is such that Star$(n)$ is a Player 1 win. We play the Star$(n+2)$ game with Red going first and show that Blue has a winning strategy.
		
		Consider the configurations $S_3$ and $S_4$ depicted in \Cref{fig7}. In each case, there are precisely two red edges drawn and we include the possibility that the blue edge $u_1u_2$ contained in the $K_n$ might be incident to one or both of the red edges (i.e. we might have $u_i=w_j$ for some $i$ and $j$), but we require that the two vertices $w_1$ and $w_2$ be distinct.
		\begin{figure}[htb]
			\captionsetup{justification=centering}
			\begin{center}
				\begin{tikzpicture}[scale=0.8, cross/.style={path picture={ 
							\draw[black]
							;
					}}]
					\def\a{1.5} 
					\def\b{2} 
					\def\q{4} 
					\def\x{{\a^2/\q}} 
					\def\y{{\b*sqrt(1-(\a/\q)^2}} 
					\coordinate (O) at (-1,0); 
					\coordinate (Q) at (-\q,-1); 
					\coordinate (Q') at (-\q, 1);
					\coordinate (P) at (-\x,\y); 
					\coordinate (P') at (-\x,-\y);
					
					\draw[thick] (O) ellipse({\a} and {\b});
					\draw[red, thick] (-4, 1) -- (-2.3, 1);
					
					\draw[red, thick] (-4, -1) -- (-2.3, -1);
					\fill[] (Q) circle(0.05) node[left] {$v_2$};
					\fill[] (Q') circle(0.05) node[left] {$v_1$};
					\fill[] (-2.3, 1) circle(0.05) node[below left] {$w_1$};
					\fill[] (-2.3, -1) circle(0.05) node[below left] {$w_2$};
					\draw[blue, thick] (-4, -1) -- (-4, 1);
					\fill[] (-1.3, -1.3) circle(0.05) node[left] {$u_1$};
					\fill[] (-0.9, -0.8) circle(0.05) node[right] {$u_2$};
					\draw[blue, thick] (-1.3, -1.3) -- (-0.9, -0.8);
					
					\node at (-1, 0) {$K_n$};
					\node at (-2, -2.7) {$S_3$};
				\end{tikzpicture}
				\hspace{3cm}
				\begin{tikzpicture}[scale=0.8, cross/.style={path picture={ 
							\draw[black]
							;
					}}]
					\def\a{1.5} 
					\def\b{2} 
					\def\q{4} 
					\def\x{{\a^2/\q}} 
					\def\y{{\b*sqrt(1-(\a/\q)^2}} 
					\coordinate (O) at (-1,0); 
					\coordinate (Q) at (-\q,-1); 
					\coordinate (Q') at (-\q, 1);
					\coordinate (P) at (-\x,\y); 
					\coordinate (P') at (-\x,-\y);
					
					\draw[thick] (O) ellipse({\a} and {\b});
					\draw[red, thick] (-4, -1) -- (-2.3, 1);
					
					\draw[red, thick] (-4, -1) -- (-2.3, -1);
					\fill[] (Q) circle(0.05) node[left] {$v_2$};
					\fill[] (Q') circle(0.05) node[left] {$v_1$};
					\draw[blue, thick] (-4, -1) -- (-4, 1);
					\fill[] (-1.3, -1.3) circle(0.05) node[left] {$u_1$};
					\fill[] (-0.9, -0.8) circle(0.05) node[right] {$u_2$};
					\draw[blue, thick] (-1.3, -1.3) -- (-0.9, -0.8);
					\fill[] (-2.3, 1) circle(0.05) node[left] {$w_1$};
					\fill[] (-2.3, -1) circle(0.05) node[below left] {$w_2$};
					
					\node at (-1, 0) {$K_n$};
					\node at (-2, -2.7) {$S_4$};
				\end{tikzpicture}
				\caption{The positions $S_3$ and $S_4$, respectively.}\label{fig7}
				
			\end{center}
		\end{figure}
		\begin{claim*}
			If Red and Blue alternate claiming one edge starting from either $S_3$ or $S_4$ with Red going first, then Blue has a strategy to end the game with a star of size at least as large as Red's largest star.
		\end{claim*}
		\begin{subproof}
			Note that the restriction of both $S_3$ and $S_4$ to the $K_n$ subgraph $K_{n+2}\setminus\{ v_1, v_2\}$ pictured is precisely an instance of the Star$(n)$ game in which Blue has made the first move, and no other move has been played yet. Blue now plays as follows.
			
			Whenever Red makes a move inside $K_n$, Blue responds according to the winning strategy of Player 1 for the Star$(n)$ game. If Red has just claimed the last free edge of $K_n$, Blue skips his turn. For all other moves, Blue plays according to an explicit pairing strategy for all of the unclaimed edges incident to one of $v_1$ and $v_2$. More specifically, we pair up these unclaimed edges as $(v_1x, v_2x)$ for each $x\in K_n\setminus \{w_1, w_2\}$ together with either the pair $(v_1w_2, v_2w_1)$ (if we started from $S_3$) or $(v_1w_1, v_1w_2)$ (if we started from $S_4$). 
			Blue then plays so that whenever Red claims one element of any of these pairs, Blue responds by claiming the other element from the same pair.
			
			Let $G_R,G_B$ be the final graphs claimed at the end of the game and $G_R',G_B'$ the subgraphs on $K_{n+2}\setminus\{v_1,v_2\}$.  It is not difficult to check that regardless of whether we started from $S_3$ or $S_4$, at the end of the game we have $\deg_{G_R}(v_1)=\deg_{G_B}(v_2)$ (and therefore also $\deg_{G_B}(v_1)=\deg_{G_R}(v_2)$), so neither $v_1$ nor $v_2$ can be the centre of a winning star for Red (as Player 1). Now, the crucial observation is that at the end of the game, Blue attains a Player 1 winning position inside the $K_n$ sub-board, and moreover, every vertex $x\in K_n$ except for exactly one of the vertices $w_1$ or $w_2$ sees precisely one red edge and one blue edge among the edges $v_1x$ and $v_2x$. Write $y\in \{w_1,w_2\}$ for the vertex in $K_n$ which does not satisfy this property, and let $z\in K_n$ be such that it maximizes $\deg_{G_B'}(z)$.
			
			\textit{Case 1:} $y\neq z$. In this case we have by the definition of $y$ that $z$ has exactly one blue neighbor in $\{v_1,v_2\}$, and hence
			\[\deg_{G_B}(z)=\deg_{G'_B}(z)+1> \max_{x\in K_n\setminus\{y\}}\deg_{G_R'}(x)+1=\max_{x\in K_n\setminus\{y\}}\deg_{G_R}(x)\] and also \[\deg_{G_B}(z)=\deg_{G'_B}(z)+1\geq \deg_{G'_R}(y)+2=\deg_{G_R}(y)\]
			so we conclude that $\deg_{G_B}(z)\geq \max_{x\in K_n} \deg_{G_R}(x)$. Combining this with the above observation that $\deg_{G_R}(v_1)=\deg_{G_B}(v_2)$, we see that the largest blue-degree in the game is at least as large as the largest red-degree in this case.
			
			\textit{Case 2:} $y=z$. In this case,
			\[\deg_{G_B}(z)=\deg_{G_B'}(z)\geq  \max_{x\in K_n\setminus\{z\}}\deg_{G_R'}(x)+1=\deg_{G_R}(x).\]
			It remains then to verify $\deg_{G_B}(z)\ge \deg_{G_R}(z)$, which we do by further breaking our argument into subcases based on the parity of $n$.
			
			If $n=2k$, then exactly as we argued in Case 1 of \Cref{star+1}, we find that we must have $\deg_{G'_B}(z)\geq k+1$ since otherwise all vertices of $K_n$ would have blue degree $k$ and red degree $k-1$, contradicting the fact that Red has claimed half the edges of the $K_n$. This implies $\deg_{G'_R}(z)\le k-2$, and hence in particular \[\deg_{G_B}(z)=\deg_{G_B'}(z)\geq \deg_{G_R'}(z)+2=\deg_{G_R}(z).\]
			
			If instead $n=2k+1$, then here we must have $\deg_{G_B'}(z)\geq k+1$ since $\deg_{G_B'}(z)>\deg_{G_R'}(z)$, so $\deg_{G_R'}(z)\le k-1$ and hence
			\[\deg_{G_B}(z)=\deg_{G_B'}(z)\geq \deg_{G_R'}(z)+2=\deg_{G_R}(z).\]
			
			We conclude that regardless of the parity of $n$ we always have that $\deg_{G_B}(z)\geq \max_{x\in K_n} \deg_{G_R}(x)$. Again combining this with the fact that $\deg_{G_R}(v_1)=\deg_{G_B}(v_2)$, we conclude that the largest blue-degree in the game that has just been played is at least as large as the largest red-degree.
		\end{subproof}
		
		Using the above claim, it remains to show that Blue can always arrive in one of $S_3$ or $S_4$ while beginning the Star$(n+2)$ game as Player 2.  To do this, Blue will play his first edge incident to the first edge played by Red, and then after Red's second move we will end up in one of the 5 (unlabelled) diagrams depicted in \Cref{fig:5States}.  In \Cref{fig8} below we show how one can label each of these 5 cases (possibly after introducing an isolated vertex) so as to be contained in a subgraph of either $S_3$ or $S_4$.
		\begin{figure}[htb]
			\captionsetup{justification=centering}
			\begin{center}
				\begin{tikzpicture}
					\draw[red, thick] (1, 0) -- (0, 0) -- (0, 1);
					\draw[blue, thick] (0, 1) -- (1, 1);
					\fill[] (0, 1) circle(0.05) node[left] {$u_1$};
					\fill[] (1, 1) circle(0.05) node[right] {$u_2$};
					\fill[] (0, 0) circle(0.05) node[left] {$v_1$};
					\fill[] (1, 0.5) circle(0.05) node[right] {$v_2$};
					\fill[] (1, 0) circle(0.05);
					\node at (0.5,-0.5) {$S_4$};
					
					\draw[red, thick] (3, 1) -- (3, 0) -- (4, 1);
					\draw[blue, thick] (4, 1) -- (3, 1);
					\fill[] (3, 1) circle(0.05) node[left] {$u_1$};
					\fill[] (4, 1) circle(0.05) node[right] {$u_2$};
					\fill[] (3, 0) circle(0.05) node[left] {$v_1$};
					\fill[] (4, 0) circle(0.05) node[right] {$v_2$};
					\node at (3.5,-0.5) {$S_4$};
					
					\draw[red,thick] (6, 1)--(6, 0);
					\draw[red, thick] (7, 1)--(7, 0);
					\draw[blue,thick] (6,1)--(7,1);
					\fill[] (6, 1) circle(0.05) node[left] {$u_1$};
					\fill[] (7, 1) circle(0.05) node[right] {$u_2$};
					\fill[] (6, 0) circle(0.05) node[left] {$v_1$};
					\fill[] (7, 0) circle(0.05) node[right] {$v_2$};
					\node at (6.5,-0.5) {$S_3$};
					
					\draw[red, thick] (10, 0) -- (9, 1);
					\draw[red,thick] (9, 0) -- (9, 1);
					\draw[blue, thick] (10, 1)-- (9, 1);
					\fill[] (9, 1) circle(0.05) node[left] {$v_1$};
					\fill[] (10, 1) circle(0.05) node[right] {$v_2$};
					\fill[] (9, 0) circle(0.05) node[left] {$u_1$};
					\fill[] (10, 0) circle(0.05) node[right] {$u_2$};
					\node at (9.5,-0.5) {$S_4$};
					
					\draw[red, thick] (13.7, 0) -- (13.7, 1);
					\draw[red, thick] (12,1) -- (12,0);
					\draw[blue, thick] (12, 1) -- (13, 1);
					\fill[] (12, 1) circle(0.05) node[left] {$u_1$};
					\fill[] (13, 1) circle(0.05) node[right] {$u_2$};
					\fill[] (12, 0) circle(0.05) node[left] {$v_1$};
					\fill[] (13.7, 0) circle(0.05) node[right] {$v_2$};
					\fill[] (13.7, 1) circle(0.05) node[right] {};
					\node at (12.75,-0.5) {$S_3$};
				\end{tikzpicture}
				\caption{The five possible game states depicted in \Cref{fig:5States} together with an appropriate choice for the vertices $u_1,u_2,v_1,v_2$.}\label{fig8}
			\end{center}
		\end{figure}
		
		With this, Blue now plays his second edge to be whichever one of $v_1v_2$ or $u_1u_2$ he has not already been played. The game is then precisely in state $S_3$ or $S_4$ with Red to move, so the result follows by the above claim.
	\end{proof}
	
	We again close this section by formally putting these results together to prove our main result for this section.
	\begin{proof}[Proof of \Cref{star}]
		By the propositions above, for each integer $n\ge 3$, at least two of the numbers in $\{n,n+1,n+2\}$ are such that $\Star(n)$ is a Player 2 win.  It follows then that the set of winning values for Player 2 has asymptotic density at least $2/3$.
	\end{proof}
	
	\section{The biased clique-building game}
	
	In this section we prove \Cref{bias}, which we recall says that Player 2 wins the Clique$_{(p,q)}(n)$ game for all $q,n$ sufficiently large in terms of $p$.  Roughly speaking, we will prove this by having Player 2 dedicate $s$ of their $q$ edges each round towards blocking Player 1 from making a large clique, while simultaneously using their remaining $t:=q-s$ edges to build as large of a clique as possible for themselves\footnote{Our original proof had Player 2 dedicate all $q$ of their edges towards making Player 1's largest clique as small as possible.  This new approach, essentially due to an anonymous referee, significantly improves upon our previous bounds by dedicating more edges towards building Player 2's own clique.}.

	For this proof approach to work, we need to argue both that Player 2 can stop Player 1 from making a large clique, and that they can build a large clique themselves.  Each of these parts relies on existing results in the literature on Maker-Breaker games.  The first is a biased form of the Erd\H{o}s-Selfridge theorem due to Beck, which we have specialized below to the particular case of building cliques of a given size.
	\begin{theorem}[Beck {\cite{Beck82}}]\label{erdosselfridge}
		Suppose two players take turns claiming previously unclaimed edges of a $K_n$, with Player 1 claiming $p$ edges and Player 2 claiming $s$ edges every turn. If $k$ is such that
		\beq{beckcond} \binom{n}{k} (s+1)^{-\binom{k}{2}/p}<\frac{1}{s+1},\enq
		then Player 2 can play in a way which guarantees that at the end of the game, Player 1 has not claimed all of the edges of a clique on $k$ vertices.
	\end{theorem}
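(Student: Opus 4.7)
The plan is to follow Beck's standard potential (danger-function) approach to proving biased Erd\H{o}s-Selfridge-type criteria. For each $k$-clique $A \subseteq K_n$, let $m_A$ denote the number of its edges currently claimed by Player 1, and call $A$ \emph{alive} if Player 2 has claimed none of its edges. Assign to each $A$ the weight
\[w(A) = \begin{cases}(q+1)^{-(\binom{k}{2}-m_A)/p} & \text{if } A \text{ is alive}, \\ 0 & \text{otherwise},\end{cases}\]
and let $T = \sum_A w(A)$ be the total potential. Under hypothesis~\eqref{beckcond} the initial value is $T_0 = \binom{n}{k}(q+1)^{-\binom{k}{2}/p} < \tfrac{1}{q+1}$. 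Moreover, Player 1's very first turn multiplies each $w(A)$ by at most $(q+1)^{p/p} = q+1$, so $T<1$ immediately after Player 1's opening move.

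Player 2 plays greedily. Splitting each of her turns into $q$ sequential sub-moves, on each sub-move she claims the unclaimed edge $f$ maximizing the current \emph{edge-danger} $S_f := \sum_{A\ni f,\,A\,\text{alive}}w(A)$; this kills every alive $A$ through $f$ and decreases $T$ by exactly $S_f$. Dually, each sub-move by Player 1 that claims an edge $e$ multiplies $w(A)$ by $(q+1)^{1/p}$ for every alive $A$ through $e$, hence increases $T$ by $S_e\bigl((q+1)^{1/p}-1\bigr)$, where $S_e$ is measured just before the sub-move. The key claim is that $T$ is nonincreasing across any round consisting of Player 1's $p$ sub-moves followed by Player 2's $q$ sub-moves. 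Granted this claim, $T$ stays strictly below $1$ for the entire game; since a fully Player-1-claimed $k$-clique $A$ would force $m_A = \binom{k}{2}$ and so $w(A) = 1 \le T$, we conclude that Player 2 successfully prevents Player 1 from ever completing a $k$-clique.

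To establish the key claim I pair Player 1's $p$ sub-moves with Player 2's $q$ sub-moves so that each of Player 2's decrements dominates $\tfrac{q}{p}$ times the corresponding Player 1 increment. This reduces monotonicity of $T$ to the scalar inequality
\[p\bigl((q+1)^{1/p}-1\bigr) \le q,\]
equivalently $(1+q/p)^p \ge q+1$, which is immediate from Bernoulli's inequality. I expect the main technical obstacle to be making the pairing precise: the values $S_e$ and $S_f$ change after each sub-move (killed sets stop contributing), so one must verify carefully that Player 2's greedy choice always dominates Player 1's on a sub-move-by-sub-move basis. This is the standard subtlety in biased Erd\H{o}s-Selfridge arguments, and handling it amounts to exploiting the fact that the edge-dangers $S_e$ of still-unclaimed edges can only decrease as sub-moves are played, so that Player 2's greedy decrements, taken at the freshest state, dominate Player 1's earlier increments.
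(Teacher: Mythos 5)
A preliminary remark: the paper does not prove this statement at all. It is quoted as a known theorem of Beck \cite{Beck82} (the biased Erd\H{o}s--Selfridge criterion, specialized to the hypergraph whose $\binom{n}{k}$ winning sets are the edge sets of the $k$-cliques, each of size $\binom{k}{2}$) and is used as a black box in the proof of Theorem~\ref{bias}. So the only question is whether your argument correctly reproves the quoted result. Your skeleton is the right one and is indeed Beck's: the potential $T$, the opening-move bound $T<(q+1)T_0<1$, and the observation that a Player-1-completed $K_k$ would force $T\ge 1$.

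However, the step you yourself flag as ``the standard subtlety'' is exactly where your argument breaks, because the assertion that ``the edge-dangers $S_e$ of still-unclaimed edges can only decrease as sub-moves are played'' is false. When Player 1 claims $e$, every alive $A\ni e$ has its weight multiplied by $\lambda:=(q+1)^{1/p}>1$, which \emph{strictly increases} $S_f$ for every other unclaimed edge $f$ lying in such an $A$. Two consequences. First, the $j$-th of Player 1's sub-moves in a turn can contribute up to $(\lambda-1)\lambda^{j-1}M$ rather than $(\lambda-1)M$, where $M$ is the maximum danger at the start of his turn; the correct bound on his total gain per turn is the geometric sum $\sum_{j=1}^{p}(\lambda-1)\lambda^{j-1}M=(\lambda^p-1)M=qM$, not $p(\lambda-1)M$, so your reduction to the Bernoulli inequality $p((q+1)^{1/p}-1)\le q$ is not a valid accounting (the final bound $qM$ happens to be the same, but for a different reason, and Bernoulli is not needed at all). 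Second, with rounds grouped as ``Player 1's $p$ sub-moves followed by Player 2's $q$ sub-moves,'' monotonicity of $T$ can genuinely fail: if a single edge $e^*$ carries almost all the danger (say $e^*$ lies in many alive cliques that pairwise meet only in $e^*$), Player 1 grabs $e^*$ first and Player 2's later greedy picks can only remove low-danger edges, so $T$ increases over that round. The repair is standard: peel off Player 1's opening turn (your factor $q+1$), then group the remaining play into rounds consisting of Player 2's turn \emph{followed by} Player 1's turn. During Player 2's turn dangers only decrease, so each of her $q$ greedy kills removes at least $M$, the maximum danger over edges surviving her turn; Player 1's subsequent gain is at most $(\lambda^p-1)M=qM$ by the geometric-sum bound. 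With these two corrections the proof goes through.
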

	
	We also need the following result of Gebauer~\cite{gebauer}, whose notation we have altered to better match our forthcoming strategy stealing argument.
	\begin{theorem}[\cite{gebauer}]\label{gebauer}
		Suppose two players take turns claiming previously unclaimed edges of a $K_{n}$, with Player 1 claiming $p$ edges and Player 2 claiming $t$ edges every turn.  Then Player 2 can play in a way which guarantees that at the end of the game, Player 2 has claimed all of the edges of a clique on
		\[\left(\frac{t}{\log_2(p+1)}-o(1)\right) \log_2 (n-2p)\]
		vertices, where here the $o(1)$ term tends towards 0 as $n$ tends towards infinity.
	\end{theorem}
	We note that the statement of this result differs slightly from that of \cite{gebauer}, in that the original refers to Player 1 claiming a large clique and the $\log_2(n-2p)$ term is replaced by $\log_2(n)$.  The present statement follows from the original simply by having Player 2 ignore the at most $2p$ vertices that Player 1 touches in their first move and then having Player 2 follow the first player's strategy from \cite{gebauer} to construct this large clique.

	Combining these two results and our outlined strategy above gives the following.
	\begin{proposition}\label{prop:bias}
		If $p,s,t\ge 1$ are integers satisfying
		\[t\log_2(s+1)>2p\log_2(p+1),\]
		then there exists an integer $n_0$ such that Clique$_{(p,s+t)}(n)$ is a Player 2 win for all $n\ge n_0$.
	\end{proposition}
	\begin{proof}
		As usual we let Red and Blue denote the two players of $\Clique_{(p,s+t)}(n)$ with Red going first.  For each of his turns, Blue will use $s$ of his $s+t$ edges to play according to the strategy of Player 2 given in \Cref{erdosselfridge} with 
		\[k:=1+\lceil 2p\log_{s+1}(n)\rceil.\]
		Note that this is a valid choice of $k$ provided $n$ is large enough to ensure $k\ge s+2$, since in this case 
		\[{n\choose k}\cdot (s+1)^{-{k\choose 2}/p}\le k^{-1} n^{k}\cdot (s+1)^{-{k\choose 2}/p}\le \frac{1}{s+2} (s+1)^{k \log_{s+1}(n)-{k\choose 2}/p}\le  \frac{1}{q+2},\]
		with this last step using $\log_{s+1}(n)\le \frac{1}{2p}(k-1)$ by definition of $k$.  In total, this ensures that regardless of Red's strategy, she will end the game without claiming all of the edges of a clique on $k$ vertices.  
		
		It remains to argue that Blue can use their remaining $t$ edges, whose use we have not yet specified, to construct a clique strictly larger than this.  For this, he uses his remaining $t$ edges each round to follow the strategy of Player 2 given in \Cref{gebauer}. In doing so, Blue is guaranteed to end the game having claimed all of the edges of a clique with $\left(\frac{t}{\log_2(p+1)}-o(1)\right) \log_2(n-2p)$ vertices.  By our hypothesis on $p,s,t$, we have for $n$ sufficiently large that
		\[\left(\frac{t}{\log_2(p+1)}-o(1)\right) \log_2(n-2p)\ge \frac{2p}{\log_2(s+1)}\log_{2}(n)+2\ge k.\]
		
		That is, Blue can guarantee that their largest clique is strictly larger than Red's largest clique for $n$ sufficiently large, proving the result.
	\end{proof}
	It remains now to optimize the choices of $s,t$ in \Cref{prop:bias}.  For example, a simple choice of parameters gives the following.
	
	\begin{corollary}\label{cor:bias}
		If $p,q\ge 1$ are integers satisfying $q\ge 3p+1$, then there exists an integer $n_0$ such that Clique$_{(p,q)}(n)$ is a Player 2 win for all $n\ge n_0$.
	\end{corollary}
	\begin{proof}
		Observe that the integers $s=p$ and $t=2p+1$ satisfy $t\log_2(s+1)>2p\log_2(p+1)$.  Therefore, \Cref{prop:bias} implies the result when $q=3p+1$, and the result holds for larger $q$ by monotonicity.
	\end{proof}
	
	A slightly more careful optimization allows us to prove the result for $q=(2+o(1))p$, which is the optimal range one can prove things for using \Cref{prop:bias} alone. 
	
	\begin{proof}[Proof of \Cref{bias}]
		For notational convenience, let \[\ep:=\frac{4\log_2\log_2(p+1)}{\log_2(p+1)}.\]  Recall that \Cref{bias} claims that for all $p\ge 1$ and integers
		\[q\ge (2+3\ep)p+2,\]
		the game Clique$_{(p,q)}(n)$ is a Player 2 win for all $n$ sufficiently large.  If $\ep> 1$ (which does hold for some small values of $p$), then this result immediately follows from \Cref{cor:bias}, so we may assume from now on that $p$ is such that $\ep\le 2$. Similarly if $p=1$ then $\ep=0$ and the result again follows from \Cref{cor:bias}, so we may assume $p\ge 2$.   With this these two assumptions in mind, we aim to show the following.
		\begin{claim*}
			The real numbers $s:=\ep(p+1)-1$ and $t:=(2+\ep)p$ satisfy $s,t>0$ and \[t\log_2(s+1)>2p\log_2(p+1).\]
		\end{claim*}
		\begin{proof}
			We have $t>0$ for all values of $p\ge 1$, and it is straightforward to check that $s>0$ holds due to our assumption that $p\ge 2$, so it remains only to prove the last inequality.  Note that
			
			\[t\log_2(s+1)=(2+\ep)p (\log_2(p+1)+\log_2(\ep))=2p\log_2(p+1)\cdot \left(1+\frac{\ep}{2}\right)\left(1+\frac{\log_2(\ep)}{\log_2(p+1)}\right),\]
			so it remains to show that the second term in this product is strictly greater than 1, i.e.\ that 
			\[1+\frac{\ep}{2}+\frac{\log_2(\ep)}{\log_2(p+1)}+\frac{\ep}{2}\cdot \frac{\log_2(\ep)}{\log_2(p+1)}>1.\]
			Using our assumption that $\ep\le 2$ on the last term in this sum yields that it suffices to show
			\[\frac{\ep}{2}> -\frac{2 \log_2(\ep)}{\log_2(p+1)},\]
			or equivalently that
			\[\log_2(p+1)>4 \ep^{-1}\cdot  \log_2(\ep^{-1})= \frac{\log_2(p+1)}{\log_2\log_2(p+1)}\cdot \left(\log_2\log_2(p+1)-\log_2(4\log_2\log_2(p+1))\right),\]
			and this does indeed hold since $\log_2(4\log_2\log_2(p+1))>0$ for $p\ge 2$.
		\end{proof}
		With the above claim in mind, the integers $\lceil s\rceil$ and $\lceil t\rceil$ are both positive and satisfy the conditions of \Cref{prop:bias}.  As such, we conclude that Player 2 wins Clique$_{(p,q)}(n)$ for $n$ large and $q$ at least
		\[\lceil s\rceil+\lceil t\rceil\le s+t+2=(2+2\ep)p+\ep+1\le (2+3\ep)p+2,\]
		proving the result.
	\end{proof}
	
	\section{Concluding remarks}
	In this paper, we showed that in the games $\Clique(n)$ and Star$(n)$ introduced by Erd\H{o}s, Player 2 wins for most possible values of $n$. In order to show that Player 2 wins for all $n$ sufficiently large, it is natural to try and prove this for a more general set of games.  There are many such games one could consider; we mention one natural case here. 
	
	\begin{definition*}
		Given a graph $H$, we define the two-player game Clique$(H)$ as follows. Two players take turns claiming previously unclaimed edges of $H$ until all edges have been claimed. They each claim precisely one edge on each of their respective turns, with Player 1 going first. Let $G_1$ and $G_2$ be the two graphs claimed by Player 1 and Player 2, respectively, at the end of the game (so that $E(G_2)=E(H)\setminus E(G_1)$). Player 1 wins the game if and only if they have claimed a strictly larger clique than Player 2, i.e. if $\omega(G_1)>\omega(G_2)$.
	\end{definition*}
	Note that in this definition, $\Clique(K_n)=\Clique(n)$.  For this more general version of the game, a natural followup question to the conjecture of Erd\H{o}s in \Cref{intro} would be: is $\Clique(H)$ a Player 2 win for all $H\ne K_2$? A little bit of thought yields that this is not the case.
	\begin{proposition}\label{prop:genHost}
		If $H_t$ is the graph which consists of $t\ge 2$ copies of $K_4$ sharing a common edge $uv$, then Player 1 wins the Clique$(H_t)$ game.
	\end{proposition}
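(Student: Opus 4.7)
The plan is to prove that Player 1 wins $\Clique(H_t)$ for every $t \ge 2$ by induction on $t$, with Player 1's first move always being to claim $uv$. The key structural facts are that every $K_4$ in $H_t$ contains $uv$, and every triangle of $H_t$ not containing $uv$ must contain some edge $a_ib_i$ (being of the form $\{u,a_i,b_i\}$ or $\{v,a_i,b_i\}$). Hence after Player 1 takes $uv$ we already have $\omega(G_2) \le 3$, with equality only if Player 2 completes one of the $2t$ ``center-using'' triangles. It therefore suffices to exhibit a continuation strategy for Player 1 that simultaneously (a) prevents Player 2 from completing any such triangle and (b) builds at least one triangle for Player 1, since this forces $\omega(G_1) \ge 3 > 2 \ge \omega(G_2)$.

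For the base case $t=2$, I would verify the result by an explicit adaptive strategy together with a case analysis on Player 2's opening line. After taking $uv$, Player 1 plays by the following priority scheme: (i) if Player 2 is one edge away from completing a center-using triangle, block the unique completing edge; (ii) else, if Player 1 can complete a triangle this turn, do so; (iii) else, if there is a copy $i$ in which no edge has yet been played and $a_ib_i$ is unclaimed, take $a_ib_i$; (iv) else, play a ``double-threat'' edge simultaneously advancing two potential Player 1 triangles. The crucial observation that makes the base case go through is that when Player 2 attacks a copy via its center (e.g.\ playing $a_1b_1$, then $ua_1$, then $va_1$), Player 1's forced blocking moves $ub_1$ and $vb_1$ automatically combine with $uv$ into the triangle $\{u,v,b_1\}$; and when Player 2 avoids attacking, Player 1 can engineer a double-threat edge such as $vb_2$ which threatens both $\{u,v,b_2\}$ and $\{v,a_2,b_2\}$ at once, so Player 2 cannot block both.

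For the inductive step, assume Player 1 wins $\Clique(H_t)$. In $\Clique(H_{t+1})$, Player 1 takes $uv$ and thereafter follows a product strategy: when Player 2 plays in copies $1,\ldots,t$, Player 1 responds in those copies according to the $H_t$ winning strategy (with $uv$ treated as already claimed); when Player 2 plays in copy $t+1$, Player 1 responds within copy $t+1$ using a defensive single-copy strategy (claim $a_{t+1}b_{t+1}$ on the first such response if Player 2's first copy-$(t+1)$ move was a side edge, otherwise block any emerging threat). A short edge-counting check, with a small adjustment for the situation in which one region exhausts before the other, shows this is consistent: the sub-game on copies $1,\ldots,t$ is exactly an instance of $\Clique(H_t)$, which Player 1 wins, while the sub-game on copy $t+1$ can be shown by direct play to end with no Player 2 triangle there either. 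Combining, Player 1 has a triangle (from copies $1,\ldots,t$) and Player 2 has none anywhere, so Player 1 wins $\Clique(H_{t+1})$.

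The main obstacle is the base case: for $t=2$ the edge budget is tight (Player 1 controls only $6$ of $11$ edges), so Player 1 cannot afford any wasted move, and verifying the double-threat mechanism against every Player 2 opening requires careful enumeration. The inductive step is comparatively routine once one sets up the sub-game decomposition and handles the parity bookkeeping for which region exhausts first.
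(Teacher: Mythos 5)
Your reduction (after Player 1 takes $uv$, it suffices to build one triangle while denying Player 2 every triangle of the form $\{u,a_i,b_i\}$ or $\{v,a_i,b_i\}$) is correct and matches the paper's. However, your base-case strategy, as stated, loses to a concrete line of play, so there is a genuine gap. Take $t=2$ and let Player 2 answer $uv$ with $ua_1$. Your priority (iii) directs Player 1 to the fresh copy, claiming $a_2b_2$. Player 2 now plays $va_1$. At this point no single edge completes a Player 2 triangle (both $\{u,a_1,b_1\}$ and $\{v,a_1,b_1\}$ still need \emph{two} edges), so priority (i) is silent, priorities (ii) and (iii) do not apply, and priority (iv) sends Player 1 into copy $2$ to create a double threat (note that $a_1b_1$, $ub_1$, $vb_1$ each create at most one threat for Player 1 here, so (iv) never points into copy $1$). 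Player 2 then plays $a_1b_1$ and simultaneously threatens $ub_1$ and $vb_1$; Player 1 can block only one, Player 2 completes a triangle, and since Player 1 cannot reach a $K_4$ the game is at best a tie in clique number, i.e.\ a Player 2 win. Your scheme only anticipates the move order $a_1b_1, ua_1, va_1$ (where each intermediate position is one edge from a triangle and the forced blocks happen to build $\{u,v,b_1\}$); the reversed order $ua_1, va_1, a_1b_1$ is a delayed fork that ``block the unique completing edge'' cannot see coming. The missing idea is the one the paper's sketch relies on: Player 1 must answer \emph{locally inside the copy Player 2 just played in}, using a pairing of that copy's remaining edges chosen so that Player 2 can never assemble $\{a_ib_i,ua_i,ub_i\}$ or $\{a_ib_i,va_i,vb_i\}$ (e.g.\ pairing $ua_i$ with $ub_i$ and $va_i$ with $vb_i$ once Player 2 holds $a_ib_i$, or pairing $a_ib_i$ with the relevant center edge otherwise), rather than reacting only to one-move threats.

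Two smaller points. First, your inductive step is sound in outline but heavier than needed: the paper's argument is uniform in $t$ (Player 1 concedes one free edge in each $C_i\ne C_1$, takes $a_1b_1$, and plays a pairing strategy copy by copy), so no induction, region bookkeeping, or exhaustion analysis is required. Second, in the inductive step you assert that Player 2 ends with no triangle in copies $1,\dots,t$; the hypothesis ``Player 1 wins $\Clique(H_t)$'' only gives $\omega(G_1')>\omega(G_2')$ there, which allows $\omega(G_2')=3$ if $\omega(G_1')=4$. Your final conclusion survives (in that case $\omega(G_1)\ge 4>\omega(G_2)$ anyway), but the claim as written does not follow from the induction hypothesis.
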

	\begin{proof}[Sketch of Proof]
		Let $C_1,\ldots,C_t$ denote the copies of $K_4$ in $H_t$.  In this game, Player 1 starts by claiming the edge $uv$, after which Player 1 allows Player 2 to claim one edge from each $C_i\ne C_1$, and then Player 1 claims the edge in $C_1$ opposite $uv$.  We then claim that Player 1 can play such that (1) whenever Player 2 claims an edge in $C_i$, Player 1 also claims an edge in $C_i$, (2) Player 2 never manages to claim a triangle, and (3) Player 1 claims a triangle in $C_1$.  And indeed, a little bit of case analysis shows that it is possible to play while ensuring all of these conditions hold, giving the result.
	\end{proof}
	It is possible to extend this class of examples, and in particular it is possible to find $d$-regular graphs $H$ for any $d\ge 5$ such that $\Clique(H)$ is a Player 1 win\footnote{It suffices to show that there exist $d$-regular graphs $H$ such that (1) $H_t\subseteq H$ for some $t\ge 2$ and (2) every edge of $H-H_t$ is not contained in a triangle of $H$ (since in this case Player 1 can simply give every edge of $H-H_t$ to Player 2 at the start and play their winning strategy in $H_t$ to create a triangle).  Such an $H$ can be constructed by taking $H_t$ together with a union of copies of $K_{d-1,d-1}$ such that every part in some $K_{d-1,d-1}$ is adjacent to a vertex in $H_t$ in an appropriate way.}.  Still, it is natural to ask if there exist a ``richer'' set of counterexamples.
	\begin{question}
		Do there exist graphs $H$ with arbitrarily large clique number such that $\Clique(H)$ is a Player 1 win?  Do there exist infinitely many edge-transitive graphs such that $\Clique(H)$ is a Player 1 win?
	\end{question}
	
	Of course, one can also introduce the games Star$(H)$ and $\Clique_{(p,q)}(H)$ defined analogously as above for $\Clique(H)$.  Again one can easily construct infinitely many examples where Player 1 wins these games (e.g. $H=K_{1,2t-1}$ for Star$(H)$ and any triangle-free graph for $\Clique_{(p,q)}(H)$ with $p<q$), and again it is natural to ask if there exists a ``richer'' set of counterexamples.  In particular, the following would be of interest.
	\begin{question}
		Do there exist infinitely many regular graphs $H$ such that $\Star(H)$ is a Player 1 win?
	\end{question}
	
	For the biased clique-building game $\Clique_{(p,q)}(n)$, we proved that Player 2 wins for all $q\ge (2+o(1))p$ which, by the same token, implies that Player 1 wins for all $p\ge (2+o(1))q$.  We are inclined to believe that the following should hold.
	
	\begin{conjecture}\label{conj1}
		If $p>q$ then Player 1 wins $\Clique_{(p,q)}(n)$ for all $n$ sufficiently large.  If $q \ge p$ then Player 2 wins $\Clique_{(p,q)}(n)$ for all $n$ sufficiently large.
	\end{conjecture}
	That is, if a player is allowed to make more moves than their opponent during the game, then they can construct a strictly larger clique than their opponent.

	Finally, we note that just as we defined the biased clique-building game $\Clique_{(p,q)}(n)$ as a generalization of $\Clique(n)$, one can also define an analogous biased star-building game $\Star_{(p,q)}(n)$ in a similar way.  Here the most natural question to ask is whether $\Star_{(1,q)}(n)$ is a Player 2 win for all $q\ge 2$ and $n$ sufficiently large, and indeed this is easy to prove: if Player 1 claims an edge $uv$, then Player 2 can simply use their next turn to claim edges incident to $u$ and $v$.  The more general games $\Star_{(p,q)}(n)$ would also be of interest to study further.
	
	\section*{Acknowledgements}
	We would like to thank Thomas Bloom for bringing this problem to our attention and providing the original reference.  We would also like to thank the various referees who greatly improved the presentation of this article, and in particular for their comments leading to a substantial improvement to the bounds of \Cref{bias}.
	
	\bibliographystyle{plain}
	\bibliography{bibliography}

\end{document}